\documentclass[12pt]{article}
\usepackage{amsmath} \numberwithin{equation}{section}
\usepackage{amsfonts}
\usepackage{amsthm}

\usepackage{verbatim}
\usepackage{cite}
\newtheorem{lemma}{Lemma}[section]

\newtheorem{definition}{Definition}[section]
\newtheorem{theorem}[definition]{Theorem}
\newtheorem{remark}{Remark}[section]
\linespread{}
\begin{document}

\title{Global solutions of  a radiative and reactive  gas with  self-gravitation for
higher-order kinetics}
\author{Xulong Qin
\thanks{\footnotesize{Corresponding author
\newline\indent~~E-mail: ~qin\_xulong@163.com.
\newline\indent~~Tel: +86-20-8411-0126, Fax: +86-20-84037978.
\newline\indent~~2000 \it Mathematics Subject
Classification.} ~35Q30; 35R30; 35R76; 35N10.\newline \indent{\it~
Key Words:}~Radiative and Reactive gases; Free Boundary Problem;
 Self-Gravitation;
 \newline\indent~
 Global existence
} \qquad Zheng-an Yao\\
\it \small Department of Mathematics, Sun Yat-sen University,\\
\it \small Guangzhou 510275, People's Republic of China\\
}
\date{}

\maketitle
\begin{abstract}
The existence of global solutions is established for compressible
Navier-Stokes equations by taking into account the radiative and
reactive processes, when the heat conductivity $\kappa$
($\kappa_1(1+\theta^q)\leq\kappa\leq \kappa_2(1+\theta^q),q\geq 0$),
where $\theta$ is temperature. This improves the previous results by
enlarging the scope of $q$ including the constant heat conductivity.
\end{abstract}
\section{Introduction}
\label{intro} In this paper, we study a free boundary problem of
viscous, compressible,  radiative and reactive  gases which are
driven by self-gravitation. The system considered can be described
by the conservation of mass,  the conservation of momentum, the
balance of energy and the  reaction-diffusion
 equations
\begin{equation}\label{eulerequation}
\left\{
\begin{array}{lllllllllllllllllllll}
\rho_t+(\rho u)_y =0,\\[2mm]
(\rho u)_t+(\rho u^2)_y=(-p+\mu u_y)_y+\rho f,\\[2mm]
\rho(e_{t}+ue_{y})=(\kappa\theta_{y})_{y}+(-p+\mu
 u_{y})u_{y}+\lambda \rho \phi z^m,\\[2mm]
 \rho(z_{t}+uz_{y})=(d\rho z_{y})_{y}-\rho \phi z^m,
 ~~~~a(t)<y<b(t),~~t>0,
\end{array}
\right.
\end{equation}
where $\rho=\rho(y,t)$, $u=u(y,t)$, $z=z(y,t)$ denote the density,
velocity,  mass fraction of reactant of flows, respectively. In
addition, the positive constants $\mu$, $d$ and $\lambda$ are the
coefficients of the viscosity, species diffusion and difference in
heat between the reactant and the product. $a(t)$ and $b(t)$ are
free boundaries defined by
\begin{equation}
a'(t)=u(a(t),t), \qquad b'(t)=u(b(t),t).\nonumber
\end{equation}
$f=f(y,t)$ is the external force determined by $f=-U_y$ where $U$
solves the following boundary value problem:
\begin{equation*}
\left\{
\begin{array}{lllllllll}
U_{yy}=G\rho,\quad &a(t)<y<b(t),\\[3mm]
U(d,t)=0,\quad &d=a(t), ~~b(t),
\end{array}
\right.
\end{equation*}
where $G$ is the Newtonian gravitational constant.

Moreover, the reactant rate function $\phi=\phi(\rho,\theta)$ is
given by the Arrhenius-type law
\begin{equation}
\phi(\rho,\theta)=K\rho^{m-1}\theta^{\beta}\exp\bigg(-\frac{A}{\theta}\bigg),\label{rate}
\end{equation}
where $m\geq 1$, $\beta$ is a non-negative constant,  while $K$ and
$A$ are  positive constants.

Finally, the pressure $p$ and the internal energy $e$ can be
formulated as  follows:
\begin{align}
p=R\rho \theta+\frac{a}{3}\theta^4,\qquad
 e=C_v\theta+a\frac{\theta^4}{\rho}.\label{stateequation4}
\end{align}
 where the positive $C_v$ is the
specific heat at constant volume, $a>0$ is the Stefan-Boltzmann
constant and $R$ is the perfect gas one. From the experiments
results for gases at very high temperature, refer to \cite{ZR}, the
heat conductivity is
\begin{equation}
\kappa_1(1+\theta^q)\leq \kappa(\rho,\theta)\leq
\kappa_2(1+\theta^q),\qquad q\geq 0,\label{conductivity1}
\end{equation}
where $\kappa_1$ and $\kappa_2$ are positive constant.

The system \eqref{eulerequation} is to be completed with
initial-boundary conditions. More precisely, we suppose here that
\begin{equation}
(\rho,u,\theta,z)|_{t=0}=(\rho_0(y),u_0(y),\theta_0(y),z_0(y)),\label{initial1}
\end{equation}
and
\begin{equation}
(-p+\mu u_y,k\theta_y,d\rho z_y)(d,t))=(-p_e,0,0), \quad d=a(t),
b(t).\label{boundaryfree1}
\end{equation}
The same initial-boundary problem is proposed by Umehara and Tani in
\cite{umeharaTani,umeharaTani11}. We aim at extending their results
by enlarging the scope of $q$ and $\beta$.

The system \eqref{eulerequation} has been extensively studied from
the wellposedness of global solutions to asymptotic behavior of
solutions under various initial-boundary conditions since the
radiation is an important factor in astrophysics. Among them,
Documet \cite{Ducomet3M99} considered the combined radiative and
reactive case only for $R=0$ and
 then established the existence of  a global-in-time solution
 for the Dirichlet boundary problem   with one order
 kinetics  in \cite{Ducomet99} when $q\geq 4$. In another paper \cite{DucometZlotnikhigher}, Ducomet and
Zlotnik treated the case of higher order
 kinetics for fairly general kinetics law when $q\geq 2$. In addition, Ducomet and Zlotnik \cite{DucometZlotnik}
 investigated  the global-in-time bounds and exponential
 stabilization for solutions in $L^q$ and $H^1$ norms by
 constructing new global Lyapunov functions when $q\geq 1$.
 Moreover, Umehara and
 Tani \cite{umeharaTani} showed the global existence of solutions
 for a  self-gravitating, viscous, radiative and reactive gas
 in the case of the external pressure $p_e>0$
 on the free boundaries
 when $4\leq q\leq 16$ and $0\leq \beta \leq \frac{13}{2}$ and extended that result to $q\geq 3$ and $0\leq
 \beta<
 q+9$ in \cite{umeharaTani11}. Qin-Hu-Wang \cite{qin} filled the gap
 with $2\leq q< 3$ and $0\leq
 \beta<2q+6$. There are also various excellent works on the
 multi-dimensional radiation hydrodynamic equations in this
 direction, see \cite{Donatelli06,Donatelli07,Secchi} and references
 cited therein.

However, those results are far from satisfactory. For example, the
constant heat conductivity, which is reasonable in physics, is
excluded due to the restriction of mathematical techniques.  Our
aim, in this paper, is to show global existence of a radiative and
reactive gas for a free boundary problem including constant
heat-conductivity ($q=0$ in \eqref{conductivity1}). We remark that
similar problems for perfect or real fluid flows have been
extensively studied in
\cite{Bressan,BebernesBressan82,BebernesBressan85,Chen,
ChenHoffTrivisaCPDE,ChenHoffTrivisa1,ChenHoffTrivisa2,ChenWagner,Guo,
kawohl, Jiang,Yanagi98} and references  cited therein by many
researchers. However, those methods adopted in the literature is not
valid for the case of radiative flows, which arise extra
difficulties in mathematical analysis. More precisely, besides the
possible concentration of mass and heat, the radiation equation and
$(\rho,\theta)$ dependence of heat conductivity make it complicated.

More recently, Zhang-Zhang \cite{zhangjianwen} established global
existence result for the system \eqref{eulerequation} to Dirichlet
boundary problem with respect to velocity when $q=0$ and $\beta \in
[0,8]$ . However, it leaves a gap for the free boundary boundary
problem suggested in \cite{umeharaTani, umeharaTani11}. We are
intend to make the gap in this paper. The novelty of this paper is
to provide a new insight into solving radiative and reactive flows.
Following the framework of \cite{umeharaTani} in the context, the
main obstacle of a priori estimate is for $v_x\in
L^{\infty}(0,T;L^2[0,1])$, see Lemma \ref{lemma9999}, which in turn
is coupled with the integrability of temperature, and the
"regularity" of heat conductivity. To overcome the above
difficulties, we introduce the auxiliary function $G(v,\theta)$,
refer to Lemma \ref{lemma666} as in \cite{kawohl,Jiang} and succeed
in getting $\theta \in L^r(0,T; L^{\infty}[0,1]),\quad 0<r<q+5,q\geq
0$. Then, using the standard $L^p$ estimation of parabolic equation,
we further get $\theta \in L^{4n-4}[0,1]$ when $q\geq 0$ and $0\leq
\beta<q+9$. With those essential a priori estimates, we can deduce
 a priori estimates of higher derivatives of $(v,\theta,u,z)$. This methods not only adapt
to the Dirichlet problem in \cite{zhangjianwen} but also may be shed
light on the global existence of various radiative models, which is
our further study.

The outline of the paper is as follows. In section 2, we introduce
the Lagrangian  mass coordinates  to transform the free boundary
problem \eqref{eulerequation},
\eqref{initial1}--\eqref{boundaryfree1} into the fixed boundary
problem\eqref{subequation}--\eqref{initial}, which are equivalent
each other. In section 3, we deduce some a priori estimates for
state parameters $(\rho, u,\theta,z)$,  for which it is necessary to
guarantee the global existence of solutions by extending the local
solutions.
\section{Main Result}

In order to study a free boundary problem conveniently, we introduce
the Lagrangian masss coordinates transformation. Let
\begin{equation*}
x=\int_{a(t)}^y\rho(\xi,t)d\xi, \qquad t=t.\label{transformation}
\end{equation*}
Then, the free boundaries $a(t)$ and $b(t)$ become $x=0$ and
$x=\int_{a(t)}^{b(t)}\rho(\xi,t)d\xi=M$, which denotes the total
mass of fluid, respectively. By the conservation law of mass, we may
assume
$M=\int_{a(t)}^{b(t)}\rho(\xi,t)d\xi=\int_{a(0)}^{b(0)}\rho(\xi,0)d\xi=1$,
i.e., $0\leq x\leq 1$. So  the system \eqref{eulerequation} becomes
in Lagrangian mass coordinates

\begin{subequations}\label{subequation}
\begin{align}
&v_t=u_x,\label{sub1}\\[3mm]
&u_t=\left(-p+\frac{\mu
u_x}{v}\right)_x-G\left(x-\frac12\right),\label{sub2}\\[3mm]
&e_t=\left(\frac{\kappa}{v}\theta_x\right)_x+\left(-p+\frac{\mu
u_x}{v}\right)u_x+\lambda \phi
z^m,\label{sub3}\\[3mm]
&z_t=\left(\frac{d}{v^2}z_x\right)_x-\phi
z^m,~~~0<x<1,~~t>0.\label{sub4}
\end{align}
\end{subequations}
The corresponding boundary  conditions are
\begin{equation}
\left(-p+\frac{\mu
u_x}{v},\frac{\kappa}{v}\theta_x,\frac{d}{v^2}z_x\right)\bigg|_{x=0,1}=(-p_e,0,0),
\label{boundary1}
\end{equation}
 and the initial conditions are
\begin{equation}
(v,u,\theta, z)|_{t=0}=(v_0(x),u_0(x),\theta_0(x),z_0(x)),~~~x\in
[0,1]. \label{initial}
\end{equation}
where $v=1/\rho$ is the specific  volume and $u-\int_0^1udx$ is
instead of $u$. We remark that the external  force $f$ is more
realistic than that in  \cite{Ducomet02} by Ducomet.

The main result is stated as follows.
\begin{theorem}\label{thm}
 Let \eqref{rate}--\eqref{conductivity1} hold,
 and suppose that   $(q,\beta)\in [0,+\infty)\times [0,q+9), p_e>0$.
Moreover, the heat-conductivity $\kappa$ in \eqref{conductivity1}
satisfies
\begin{equation*}
|\kappa_{\rho}(\rho,\theta)|\leq \kappa_2(1+\theta)^q,\qquad
|\kappa_{\theta}(\rho,\theta)|\leq \kappa_2(1+\theta)^{q-1}.
\end{equation*}
 Assume that
\begin{equation*}
(v_0(x), u_0(x), \theta_0(x),z_0(x))\in
C^{1+\alpha}([0,1])\times(C^{2+\alpha}([0,1]))^3,
\end{equation*}
and
\begin{equation*}
v_0(x),~~\theta_0(x)>0,~~~~~0\leq z_0(x)\leq 1,~~~~~x\in [0,1],
\end{equation*}
where $\alpha \in (0,1)$. Then there exists a unique solution $(v,
u,\theta,z)$ of the initial boundary problem
\eqref{subequation}--\eqref{initial} for any fixed $T>0$, such that
\begin{equation*}
(v,v_x, v_t)\in (C^{\alpha,\alpha/2}(Q_{T}))^3, \qquad
(u,\theta,z)\in (C^{2+\alpha,1+\alpha/2}(Q_T))^3, \label{thmholder}
\end{equation*}
and
\begin{equation*}
v(x,t),\quad \theta(x,t)>0, \qquad 0\leq z(x,t)\leq 1, \qquad x\in
\overline{Q}_T,
\end{equation*}
 where ${Q}_T=(0,1)\times (0,T)$.

Here, as usual, $C^{m+\alpha}[0,1]$  denotes the H\"{o}lder space on
$[0,1]$ with exponents $m \in N^{+},\alpha\in (0,1)$, $C^{\alpha,
\alpha/2}(Q_T)$ the H\"{o}lder space  on $Q_T=[0,1]\times[0,T]$ with
exponent $\alpha$ in $x$ and
 $\alpha/2$ in $t$ with $\alpha\in (0,1)$. Similarly, if a function $f(x,t)\in
 C^{2+\alpha,1+\alpha/2}$ with $\alpha \in (0,1)$, then it
 means $(u_{xx},u_{t})\in C^{\alpha, \alpha/2}(Q_T)$.
\end{theorem}
\begin{remark}  The heat conductivity in \eqref{conductivity1} covers
the case of \cite{qin,umeharaTani,umeharaTani11} by
$\kappa=\kappa_1+\kappa_2\frac{\theta^q}{\rho}$.
\end{remark}
\begin{remark}\label{rem22}
The external force $p_e>0$ is not essential. The result are also
satisfactory for $p_e\in R$ provided that we get $v\in L^1[0,1]$,see
Appendix below.
\end{remark}
The existence and unique proof of Theorem \ref{thm} is motivated by
the procedure devised in \cite{DafermosHisao} through the
application of the Leray-Schauder fixed point theorem, see also
\cite{DafermosHisao,kawohl}. Therefore, a priori estimates of the
local solution  $(\rho, u,\theta,z)$ and its derivatives are
necessary.
\section{Proof of Theorem \ref{thm}}
In this section, we establish  some a priori  estimates for the
state parameters $(\rho, u,\theta,z)$ so as to get the global
existence of solutions to the problem
\eqref{subequation}--\eqref{initial}. In the sequel, the generic
positive constant $C(C(T))$ may be different from line to line. In
addition, we denote $|u|^{(0)}:=\sup_{(x,t)\in
\overline{Q}_T}|u(x,t)|$ and $\beta^{+}=\max\{\beta,0\}$.

The following a priori estimates are basic and useful. Please refer
to \cite{umeharaTani} for detailed proofs.
\begin{lemma}Under the hypotheses of Theorem \ref{thm}, it satisfies
that
\begin{equation}
\int_0^1\left(\frac{u^2}{2}+e+\lambda
z+\frac{Gx(1-x)v}{2}\right)dx+p_e\int_0^1vdx\leq C,\label{lemma111}
\end{equation}
\begin{equation*}
U(t)+\int_0^tV(s)ds\leq C(T), \label{Lemma31}
\end{equation*}
and
\begin{equation*}
\begin{split}
&0\leq z(x,t)\leq 1,\\
&\int_0^1\frac{z^2}{2}dx+\int_0^t\int_0^1\frac{d}{v^2}z_x^2dxds+\int_0^t\int_0^1\phi
z^{m+1}dxds=\int_0^1\frac{z^2_0(x)}{2}dx, \label{Lemma332}
\end{split}
\end{equation*}

where
\begin{equation}
U(t)=\int_0^1\left[C_v(\theta-1-\log \theta)+R(v-1-\log
v)\right]dx,\notag
\end{equation}
and
\begin{equation}
V(t)=\int_0^1\left(\frac{\mu
u_x^2}{v\theta}+\frac{\kappa\theta_x^2}{v\theta^2}+\frac{\lambda
\phi z^m}{\theta}\right)dx,\notag
\end{equation}
for $0\leq t\leq T$.
\end{lemma}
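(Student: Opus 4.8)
The plan is to treat the four assertions in turn, all of which follow from integrating the balance laws \eqref{sub1}--\eqref{sub4} against suitable multipliers and exploiting the boundary conditions \eqref{boundary1}.

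First, for the total-energy bound \eqref{lemma111} I would multiply \eqref{sub2} by $u$ and add the result to \eqref{sub3}, so that the internal pressure-work term $(-p+\mu u_x/v)u_x$ combines with $u(-p+\mu u_x/v)_x$ into the flux $[u(-p+\mu u_x/v)+\kappa\theta_x/v]_x$. Integrating over $x\in(0,1)$, the heat-flux boundary term drops out because $\kappa\theta_x/v|_{x=0,1}=0$, while the mechanical flux produces $-p_e(u(1,t)-u(0,t))=-p_e\frac{d}{dt}\int_0^1 v\,dx$ via \eqref{sub1}. The gravitational term $-G(x-\tfrac12)u$ is matched by differentiating $\tfrac{G}{2}x(1-x)v$ in time and integrating by parts (the boundary contributions vanish since $x(1-x)=0$ at the endpoints), and $\int_0^1\lambda\phi z^m\,dx$ is rewritten as $-\lambda\frac{d}{dt}\int_0^1 z\,dx$ using \eqref{sub4} with its no-flux boundary condition. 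Collecting terms yields the exact conservation law $\frac{d}{dt}\big[\int_0^1(\tfrac{u^2}{2}+e+\lambda z+\tfrac{G}{2}x(1-x)v)\,dx+p_e\int_0^1 v\,dx\big]=0$, and since every summand is nonnegative, integrating in time gives \eqref{lemma111} with $C$ the finite initial value; in particular this furnishes the separate bounds $\int_0^1 e\,dx\le C$ and $\int_0^1 v\,dx\le C$.

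Second, for the entropy estimate I would introduce the specific entropy $s=C_v\log\theta+R\log v+\tfrac{4a}{3}\theta^3 v$, which satisfies the Gibbs relation $de=\theta\,ds-p\,dv$ for the constitutive laws \eqref{stateequation4}. Using $v_t=u_x$ this gives $\theta s_t=e_t+p\,u_x=(\kappa\theta_x/v)_x+\mu u_x^2/v+\lambda\phi z^m$; dividing by $\theta$ and writing $\theta^{-1}(\kappa\theta_x/v)_x=(\kappa\theta_x/(v\theta))_x+\kappa\theta_x^2/(v\theta^2)$, then integrating over $(0,1)$ with $\kappa\theta_x/v|_{x=0,1}=0$, I obtain $\frac{d}{dt}\int_0^1 s\,dx=V(t)$. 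Hence $\int_0^t V\,ds=\int_0^1 s(t)\,dx-\int_0^1 s_0\,dx$, so it remains to bound $U(t)+\int_0^1 s\,dx$. The logarithmic terms cancel exactly, leaving $\int_0^1(C_v\theta+Rv+\tfrac{4a}{3}\theta^3 v)\,dx-(C_v+R)$; the radiative term is controlled by Young's inequality $\theta^3 v\le\tfrac34\theta^4 v+\tfrac14 v$ together with the energy bounds $\int_0^1\theta^4 v\,dx\le C$, $\int_0^1\theta\,dx\le C$ and $\int_0^1 v\,dx\le C$ from the first step. This shows $U(t)+\int_0^1 s\,dx\le C$ uniformly in $t$, and subtracting the fixed initial entropy gives $U(t)+\int_0^t V(s)\,ds\le C(T)$.

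Third, the bounds $0\le z\le 1$ follow from the comparison principle for \eqref{sub4}: the constant $0$ is a subsolution and $1$ a supersolution (since $\phi\ge 0$ forces $\partial_t 1-(\tfrac{d}{v^2}\partial_x 1)_x+\phi\le 0$ for the difference with the true solution), the boundary data are of no-flux type, and $0\le z_0\le 1$, so the maximum principle traps $z$ between them. With $z\ge 0$ secured, multiplying \eqref{sub4} by $z$ and integrating, the boundary term $[z\,d z_x/v^2]_0^1$ vanishes and one obtains the exact identity $\frac{d}{dt}\int_0^1\tfrac{z^2}{2}\,dx+\int_0^1\tfrac{d}{v^2}z_x^2\,dx+\int_0^1\phi z^{m+1}\,dx=0$, which integrates in time to the stated equality. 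The main obstacle is the entropy estimate: the delicate points are the correct identification of $s$ so that the dissipation reorganizes into the exact form $V(t)$, and --- crucially --- the use of total-energy conservation to close the bound on the non-sign-definite entropy functional, since $\int_0^1 s\,dx$ alone is only known to be monotone increasing and carries the radiative contribution $\tfrac{4a}{3}\theta^3 v$ that must be absorbed by $\int_0^1\theta^4 v\,dx$ through Young's inequality. Throughout, positivity of $v$ and $\theta$ and the no-flux boundary conditions justify the integrations by parts and the division by $\theta$.
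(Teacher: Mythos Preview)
Your proof is correct and follows the standard energy--entropy method that the paper itself defers to by citing Umehara--Tani \cite{umeharaTani} without giving details. Your derivation of the exact conservation law for total energy, the Gibbs-relation computation yielding $\frac{d}{dt}\int_0^1 s\,dx=V(t)$ with $s=C_v\log\theta+R\log v+\tfrac{4a}{3}\theta^3 v$, the cancellation of logarithms in $U(t)+\int_0^1 s\,dx$, and the Young-inequality absorption of $\theta^3 v$ by $\theta^4 v$ and $v$ are precisely the steps one finds in that reference; only your phrasing of the supersolution argument for $z\le 1$ is slightly garbled (it suffices to note that $z\equiv 1$ satisfies $1_t-(\tfrac{d}{v^2}\cdot 0)_x=0\ge -\phi\cdot 1^m$), but the substance is right.
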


In the sequence, it gives the upper bound of density.
\begin{lemma}\label{lemma5}
 There holds that
\begin{equation}
C_1(T)\leq v(x,t)\Longleftrightarrow \rho(x,t)\leq \frac{1}{C_1(T)},
\label{lemma51}
\end{equation}
for $0<x<1$ and $0<t<T$.
\end{lemma}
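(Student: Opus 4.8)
The plan is to derive a pointwise representation formula for $v$ by integrating the momentum equation \eqref{sub2} in space and time, and then to read off the lower bound directly from the positivity of the pressure. First I would use the mass equation \eqref{sub1} to write the viscous term as a logarithmic time derivative, $\mu u_x/v=\mu(\log v)_t$, so that the stress $\sigma:=-p+\mu u_x/v$ satisfies $\sigma+p=\mu(\log v)_t$. Integrating \eqref{sub2} over $[0,x]$ and using the free-boundary condition \eqref{boundary1}, which gives $\sigma(0,t)=-p_e$, I obtain
\[
\sigma(x,t)=\frac{d}{dt}\int_0^x u(y,t)\,dy-p_e+\frac{G}{2}x(x-1).
\]

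Combining this with $\mu(\log v)_t=\sigma+p$ and integrating in time from $0$ to $t$ then yields the representation
\[
\mu\log\frac{v(x,t)}{v_0(x)}=\int_0^x\bigl(u(y,t)-u_0(y)\bigr)\,dy-p_e t+\frac{G}{2}x(x-1)t+\int_0^t p(x,s)\,ds.
\]
Now I would estimate each term on the right. The basic energy bound \eqref{lemma111} gives $\int_0^1 u^2\,dx\le C$, hence $\bigl|\int_0^x u(y,t)\,dy\bigr|\le C$ uniformly in $t$; the initial datum is smooth and bounded, and $v_0(x)\ge\min_{[0,1]}v_0>0$, so the corresponding terms are controlled; the two terms linear in $t$ are bounded by $C(T)$ on $[0,T]$ since $x\in[0,1]$. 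The crucial point is that $p=R\theta/v+\tfrac{a}{3}\theta^4\ge 0$, so that $\int_0^t p(x,s)\,ds\ge 0$. Combining these observations gives $\mu\log v(x,t)\ge -C(T)$, i.e. $v(x,t)\ge \exp(-C(T)/\mu)=:C_1(T)>0$, which is exactly \eqref{lemma51}.

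The main technical point to get right is the setup of the representation formula, in particular the correct handling of the boundary flux through \eqref{boundary1} and the verification that every remaining term is controlled uniformly on $Q_T$ by the already-established a priori bounds in \eqref{lemma111}. I would emphasize that the positivity of $p$ is what makes this lower bound for $v$ the comparatively easy direction: one simply discards the nonnegative term $\int_0^t p\,ds$. By contrast, the matching upper bound for $v$ (equivalently a positive lower bound for the density) is the genuinely hard estimate, since it requires controlling $\int_0^t p\,ds$ from above, which couples back to the integrability estimates for the temperature proved later in the paper.
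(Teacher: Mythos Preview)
Your proof is correct and follows essentially the same route as the paper: integrate \eqref{sub2} over $[0,x]\times[0,t]$ using the boundary condition \eqref{boundary1} to obtain the representation formula for $\log v$, then drop the nonnegative pressure integral and control the remaining terms via the energy bound \eqref{lemma111}. The paper's argument is written slightly more tersely, but the formula you derive coincides with the paper's \eqref{lemma441} and the subsequent estimates are identical.
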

\begin{proof}
Integrating \eqref{sub2} over $[0,x]\times [0,t]$, we obtain
\begin{equation}
\frac{1}{\mu}\int_0^x(u-u_0)d\xi=\log v-\log
v_0-\frac{1}{\mu}\int_0^tpd\tau+\frac{Gx(1-x)t}{2\mu}+\frac{p_e}{\mu}t,\notag
\end{equation}
that is
\begin{equation}
\log v =\log
v_0+\frac{1}{\mu}\int_0^tpd\tau-\frac{Gx(1-x)t}{2\mu}+\frac{1}{\mu}\int_0^x(u-u_0)d\xi-\frac{p_e}{\mu}t.
\label{lemma441}
\end{equation}
Thus, we find by \eqref{lemma111}  that
\begin{equation}
\begin{split}
\log v &\geq \log
v_0+\frac{1}{\mu}\int_0^x(u-u_0)d\xi-\frac{Gx(1-x)t}{2\mu}-\frac{p_e}{\mu}t\\
&\geq \log v_0-C\int_0^1u^2dx-\frac{1}{\mu}\bigg(\frac{G}{8}+p_e\bigg)t-C\\
&\geq \log v_0-Ct-C,\notag
\end{split}
\end{equation}
which leads to \eqref{lemma51}.
\end{proof}

The following auxiliary result  stems from the  idea of
\cite{umeharaTani}.
\begin{lemma} \label{Lemma52}One has
\begin{equation*}
\int_0^t\max_{[0,1]}\theta^r(x,s)ds\leq C(T),
\end{equation*}
where $0\leq r\leq q+4$ and $q\geq 0$.
\end{lemma}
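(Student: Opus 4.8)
The plan is to reduce the pointwise-in-$x$ bound on $\theta$ to the parabolic dissipation contained in $V(t)$ and then to control the emerging weight purely by the energy in \eqref{lemma111}. Since $\theta^r\le 1+\theta^{q+4}$ for $0\le r\le q+4$, the binding case is $r=q+4$, and this is exactly what dictates the exponent below: I set $s:=(q+4)/2\ (\ge 2)$. From $e=C_v\theta+av\theta^4$ together with \eqref{lemma111} I first record the two consequences $\int_0^1\theta\,dx\le C$ and $\int_0^1 v\theta^4\,dx\le C$, neither of which requires any bound on $v$.

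First I would extract the maximum through the fundamental theorem of calculus. Because $\int_0^1\theta\,dx\le C$, for each fixed $t$ the mean value theorem furnishes a point $x_0=x_0(t)$ with $\theta(x_0,t)\le C$, so that
\[
\theta^s(x,t)\le \theta^s(x_0,t)+s\int_0^1\theta^{s-1}|\theta_x|\,dx\le C+s\int_0^1\theta^{s-1}|\theta_x|\,dx,
\]
and hence $\max_{[0,1]}\theta^s\le C+s\int_0^1\theta^{s-1}|\theta_x|\,dx$. The core step is to split this integrand so as to expose the dissipation already controlled by \eqref{lemma111}. By Cauchy--Schwarz,
\[
\int_0^1\theta^{s-1}|\theta_x|\,dx\le\left(\int_0^1\frac{v\theta^{2s}}{\kappa}\,dx\right)^{1/2}\left(\int_0^1\frac{\kappa\theta_x^2}{v\theta^2}\,dx\right)^{1/2}.
\]
The second factor is precisely the square root of the $\kappa$-part of $V(t)$, whose time integral is bounded by $C(T)$ via the entropy dissipation estimate $\int_0^t V(s)\,ds\le C(T)$ of the first lemma.

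The decisive point is that the first factor is bounded uniformly in $t$. With $2s=q+4$ and the lower bound $\kappa\ge\kappa_1(1+\theta^q)$ from \eqref{conductivity1}, one checks the elementary inequality $\tfrac{\theta^{q+4}}{1+\theta^q}\le C(1+\theta^4)$ for all $\theta\ge0$, whence
\[
\int_0^1\frac{v\theta^{2s}}{\kappa}\,dx\le \frac{1}{\kappa_1}\int_0^1 v(1+\theta^4)\,dx\le C.
\]
This is the heart of the matter: the choice $s=(q+4)/2$ is forced so that the growth $\kappa\gtrsim\theta^q$ converts $\theta^{2s}$ into exactly the $v$-weighted energy density $\theta^4$ that \eqref{lemma111} controls. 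Consequently $\max_{[0,1]}\theta^s\le C\bigl(1+\sqrt{V_\kappa(t)}\bigr)$, writing $V_\kappa(t):=\int_0^1\kappa\theta_x^2/(v\theta^2)\,dx$.

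Finally I would raise to the power $r/s$ and integrate in time. For $0\le r\le q+4=2s$ one has $r/s\le2$, and since $1+\sqrt{V_\kappa}\ge1$,
\[
\max_{[0,1]}\theta^r\le C\bigl(1+\sqrt{V_\kappa}\bigr)^{r/s}\le C\bigl(1+\sqrt{V_\kappa}\bigr)^2\le C(1+V_\kappa),
\]
using $2\sqrt{V_\kappa}\le1+V_\kappa$ in the last step. Integrating over $[0,t]$ and invoking $\int_0^tV_\kappa(s)\,ds\le\int_0^tV(s)\,ds\le C(T)$ yields $\int_0^t\max_{[0,1]}\theta^r\,ds\le C(T)$, as claimed. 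I expect the only genuinely delicate point to be the uniform bound on the weight $\int_0^1 v\theta^{2s}/\kappa\,dx$, which is where the interplay between the energy integrability of $\theta$, the growth of $\kappa$, and the dissipation is resolved; the remainder is the standard Kawohl--Jiang device. It is worth noting that this argument uses only the energy and dissipation bounds of the first lemma and does \emph{not} require the lower bound on $v$ from Lemma \ref{lemma5}.
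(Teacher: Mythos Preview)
Your proof is correct and follows essentially the same route as the paper: mean value theorem to anchor $\theta^{r/2}$, Cauchy--Schwarz splitting of $\theta^{r/2-1}|\theta_x|$ to expose $V(t)$, then the key weight bound $\int_0^1 v\theta^{r}/\kappa\,dx\le C\int_0^1(v+v\theta^4)\,dx\le C$ via $\kappa\ge\kappa_1(1+\theta^q)$ and \eqref{lemma111}, followed by squaring and time integration. The only cosmetic difference is that the paper works with general $r/2$ throughout rather than first treating the extremal case $s=(q+4)/2$; note also that the last inequality in your weight bound uses $\int_0^1 v\,dx\le C$, which you did not list among the ``two consequences'' but which is contained directly in \eqref{lemma111} since $p_e>0$.
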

\begin{proof}
It is known from the mean value theorem
\begin{equation*}
\theta(x(t),t)=\int_0^1\theta dx,
\end{equation*}
 where $x(t)\in [0,1]$ for every $t\in [0,T]$. Furthermore, we
arrive at
\begin{equation}
\begin{split}
\theta(x,t)^{\frac{r}{2}} &=\left(\int_0^1\theta
dx\right)^{\frac{r}{2}}
+\frac{r}{2}\int_{x(t)}^x\theta(\xi,t)^{\frac{r}{2}-1}\theta_{\xi}(\xi,t)d\xi\\[2mm]
&\leq
C\left(1+\int_0^1\frac{\kappa^{\frac12}|\theta_x|}{v^{\frac12}\theta}
\cdot\frac{v^{\frac12}\theta^{\frac{r}{2}}}{\kappa^{\frac12}}dx\right)\\
&\leq
C\left[1+\left(\int_0^1\frac{v\theta^r}{\kappa}dx\right)^{\frac12}V(t)^{\frac12}\right]\\
&\leq
C\left[1+\left(\int_0^1(v+v\theta^4)dx\right)^{\frac12}V(t)^{\frac12}\right],
\label{lemmar4}
\end{split}
\end{equation}
due to $0\leq r\leq q+4$ and Young inequality. Thus it is complete
by integrating \eqref{lemmar4} with respect to time.
\end{proof}
Similarly, we obtain the lower bound of density, which is  very
important to obtain the  existence of global solutions.
\begin{lemma}
If $q\geq 0$, then it holds that
\begin{equation*}
v(x,t)\leq C(T)\Longleftrightarrow \rho(x,t)\geq 1/C(T),
\end{equation*}
for $(x,t)\in (0,1)\times (0,T)$.
\end{lemma}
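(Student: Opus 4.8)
The plan is to exploit the representation formula \eqref{lemma441} already derived in the proof of Lemma \ref{lemma5} and to bound $\log v$ from \emph{above}; the key difference from that proof is that the pressure integral $\frac{1}{\mu}\int_0^t p\,d\tau$ must now be genuinely controlled rather than discarded. Writing the pressure in Lagrangian form via \eqref{stateequation4}, namely $p=\frac{R\theta}{v}+\frac{a}{3}\theta^4$, formula \eqref{lemma441} reads
\begin{equation*}
\log v=\log v_0+\frac{1}{\mu}\int_0^t\left(\frac{R\theta}{v}+\frac{a}{3}\theta^4\right)d\tau-\frac{Gx(1-x)t}{2\mu}+\frac{1}{\mu}\int_0^x(u-u_0)d\xi-\frac{p_e}{\mu}t.
\end{equation*}
Since $p_e>0$ and $Gx(1-x)t\ge 0$, the last two explicit terms are nonpositive and may be dropped when estimating from above, while $\frac{1}{\mu}\int_0^x(u-u_0)d\xi\le C\bigl(1+(\int_0^1 u^2\,dx)^{1/2}\bigr)\le C$ by the energy bound \eqref{lemma111}. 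Everything therefore reduces to estimating $\int_0^t p\,d\tau$.

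For the radiative part I would apply Lemma \ref{Lemma52} with $r=4$ (admissible since $q\ge 0$ forces $4\le q+4$), giving $\int_0^t\theta^4(x,\tau)\,d\tau\le\int_0^t\max_{[0,1]}\theta^4\,d\tau\le C(T)$. For the gas part I would invoke the lower bound $v\ge C_1(T)$ already established in Lemma \ref{lemma5} to write $\frac{R\theta}{v}\le\frac{R}{C_1(T)}\theta$, and then apply Lemma \ref{Lemma52} with $r=1$ (again admissible) to obtain $\int_0^t\frac{R\theta}{v}\,d\tau\le\frac{R}{C_1(T)}\int_0^t\max_{[0,1]}\theta\,d\tau\le C(T)$. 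Combining the two pieces gives $\frac{1}{\mu}\int_0^t p\,d\tau\le C(T)$, hence $\log v\le C(T)$, i.e. $v(x,t)\le C(T)$, which is the claim.

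Structurally this is the mirror image of Lemma \ref{lemma5}: there the nonnegativity of $p$ allowed the pressure integral to be thrown away to produce a \emph{lower} bound on $v$, whereas here it must be retained and controlled. I do not expect a serious obstacle within this lemma itself, since the two ingredients that make the argument close — the uniform lower bound on $v$ and the time-integrability of powers of $\theta$ — have already been secured in Lemmas \ref{lemma5} and \ref{Lemma52}. The only point needing mild care is verifying the admissibility of the exponents $r=1$ and $r=4$ against the constraint $r\le q+4$, which holds for every $q\ge 0$; once $p$ is split into its gas and radiative components the remainder is routine.
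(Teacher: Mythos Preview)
Your proposal is correct and mirrors the paper's proof essentially line for line: start from \eqref{lemma441}, discard the nonpositive $-\frac{Gx(1-x)t}{2\mu}$ and $-\frac{p_e}{\mu}t$, bound $\int_0^x(u-u_0)\,d\xi$ via the energy estimate \eqref{lemma111}, and control $\int_0^t p\,d\tau$ by splitting $p=\frac{R\theta}{v}+\frac{a}{3}\theta^4$, using the lower bound on $v$ from Lemma~\ref{lemma5} and then Lemma~\ref{Lemma52} with $r=1$ and $r=4$. The paper records this as the single line $\log v\le \log v_0+C\int_0^t\max_{[0,1]}(\theta+\theta^4)\,ds+C\int_0^1 u^2\,dx+C(T)$, citing exactly Lemmas~\ref{lemma5} and~\ref{Lemma52}.
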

\begin{proof}
It follows from \eqref{lemma441} that
\begin{equation}
\begin{split}
\log v &=\log
v_0+\frac{1}{\mu}\int_0^tpd\tau-\frac{Gx(1-x)t}{2\mu}+\frac{1}{\mu}\int_0^x(u-u_0)d\xi-\frac{p_e}{\mu}t\\
&\leq \log
v_0+C\int_0^t\max_{[0,1]}(\theta+\theta^4)ds\\
&~~~+C\int_0^1u^2dx+C\int_0^1u_0^2dx+C(T)\\
&\leq \log v_0+C(T), \notag
\end{split}
\end{equation}
by   Lemmas \ref{lemma5} and \ref{Lemma52}. Thus it is complete.
\end{proof}
\begin{lemma}
Similarly, we have when $q\geq0$
\begin{equation}
\int_0^t\int_0^1u_x^2dxds\leq C(T).\label{Lemma71}
\end{equation}
\end{lemma}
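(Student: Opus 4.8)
The plan is to carry out the standard velocity energy estimate and then convert the resulting viscous dissipation into the desired space--time bound on $u_x$, exploiting the two-sided bounds on $v$ already established. First I would multiply the momentum equation \eqref{sub2} by $u$ and integrate over $[0,1]$; integrating the flux term by parts and inserting the boundary condition \eqref{boundary1}, which forces $-p+\mu u_x/v=-p_e$ at $x=0,1$, yields the identity
\begin{equation*}
\frac{d}{dt}\int_0^1\frac{u^2}{2}\,dx+\int_0^1\frac{\mu u_x^2}{v}\,dx
=\int_0^1 p\,u_x\,dx-p_e\big(u(1,t)-u(0,t)\big)-G\int_0^1\Big(x-\tfrac12\Big)u\,dx .
\end{equation*}
Integrating in time over $[0,t]$ and using the upper bound $v\le C(T)$ (so that $1/v\ge 1/C(T)$) isolates $\int_0^t\int_0^1 u_x^2\,dx\,ds$ on the left with a positive coefficient.

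Next I would dispose of the lower-order terms on the right. The kinetic and gravitational contributions are controlled by the basic energy inequality \eqref{lemma111}: since $\int_0^1 u^2\,dx\le C$ and $|x-\tfrac12|\le\tfrac12$, Cauchy--Schwarz gives $\big|\int_0^1(x-\tfrac12)u\,dx\big|\le C$, and time integration costs only a factor $T$. The boundary term telescopes: because $v_t=u_x$ implies $\frac{d}{dt}\int_0^1 v\,dx=u(1,t)-u(0,t)$, one has $\int_0^t\big(u(1,s)-u(0,s)\big)\,ds=\int_0^1 v(x,t)\,dx-\int_0^1 v_0\,dx$, which is bounded since $p_e\int_0^1 v\,dx\le C$ in \eqref{lemma111} and $p_e>0$.

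The main obstacle is the pressure work $\int_0^t\int_0^1 p\,u_x\,dx\,ds$. I would estimate it by Young's inequality, $\int_0^t\int_0^1 p\,u_x\le\varepsilon\int_0^t\int_0^1 u_x^2+C_\varepsilon\int_0^t\int_0^1 p^2$, and absorb the first term into the left-hand side by taking $\varepsilon$ small. It then suffices to prove $\int_0^t\int_0^1 p^2\,dx\,ds\le C(T)$. Since $p=R\theta/v+\tfrac{a}{3}\theta^4$ with $v$ bounded above and below, $p^2\le C(\theta^2+\theta^8)$, so the only dangerous term is $\int_0^t\int_0^1\theta^8$. Here I would combine two ingredients: from the internal-energy bound in \eqref{lemma111} together with $v\le C(T)$ one gets $\int_0^1\theta^4\,dx\le C(T)$ (using the $a\theta^4/v$ part of $e$), while Lemma \ref{Lemma52} with $r=4\le q+4$ gives $\int_0^t\max_{[0,1]}\theta^4\,ds\le C(T)$. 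Consequently
\begin{equation*}
\int_0^t\int_0^1\theta^8\,dx\,ds\le\int_0^t\Big(\max_{[0,1]}\theta^4\Big)\Big(\int_0^1\theta^4\,dx\Big)\,ds\le C(T),
\end{equation*}
and the term $\int_0^t\int_0^1\theta^2$ is handled the same way (or via $\theta^2\le 1+\theta^4$). Collecting everything and absorbing the $\varepsilon$-term yields \eqref{Lemma71}. The crux is thus the pressure work term, whose control rests precisely on merging the pointwise-in-time $L^4_x$ energy bound for $\theta$ with the time-integrated $L^\infty_x$ bound of Lemma \ref{Lemma52}.
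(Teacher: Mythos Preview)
Your approach is essentially identical to the paper's: multiply \eqref{sub2} by $u$, integrate, handle the boundary term via $v_t=u_x$, absorb $\int p\,u_x$ by Young's inequality, and control $\int_0^t\int_0^1\theta^8$ by splitting it as $\int_0^t\max_{[0,1]}\theta^4\cdot\int_0^1\theta^4\,dx$ using Lemma~\ref{Lemma52} and the basic energy bound. One small slip to correct: since $e=C_v\theta+av\theta^4$ (not $a\theta^4/v$), the estimate $\int_0^1\theta^4\,dx\le C(T)$ follows from the \emph{lower} bound $v\ge C_1(T)$ of Lemma~\ref{lemma5}, not from the upper bound on $v$.
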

\begin{proof}
The equality \eqref{sub2} implies by  integrating from
$(0,1)\times(0,t)$ that
\begin{equation*}
\begin{split}
&\int_0^1\left(\frac{u^2}{2}+\frac{Gx(1-x)v}{2}\right)dx
+\mu \int_0^t\int_0^1\frac{ u_x^2}{v}dxds\\
 =&\int_0^1\left(\frac{u^2}{2}+\frac{Gx(1-x)v}{2}\right)(x,0)dx+p_e\int_0^1v_0(x)dx\\
 &+\int_0^t\int_0^1pu_xdxds-p_e\int_0^t\int_0^1vdxds\\
 \leq &C(T)+\varepsilon
 \int_0^t\int_0^1u_x^2dxds+\int_0^t\int_0^1p^2dxds,
\end{split}
\end{equation*}
which implies for fixed  small $\varepsilon$ that
\begin{equation*}
\begin{split}
\int_0^t\int_0^1u_x^2dxds &\leq
C(T)\left(1+\int_0^t\max_{[0,1]}\theta^4\left(\int_0^1\theta^4dx\right)ds\right)\\
&\leq C(T).
\end{split}
\end{equation*}
 The proof is complete.
\end{proof}
\begin{lemma}\label{lemma666}
For $0<\varepsilon<1$ and $q\geq 0$, there holds
\begin{equation}
\int_0^t\int_0^1\frac{\kappa\theta_x^2}{\theta^{1+\varepsilon}}dxds\leq
C.
\end{equation}
\end{lemma}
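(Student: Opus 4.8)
The plan is to test the energy equation \eqref{sub3} against the weight $\theta^{-\varepsilon}$ and integrate over $Q_t=(0,1)\times(0,t)$. First I would integrate the flux term by parts in $x$,
\begin{equation*}
\int_0^1\theta^{-\varepsilon}\left(\frac{\kappa}{v}\theta_x\right)_x dx=\left[\theta^{-\varepsilon}\frac{\kappa}{v}\theta_x\right]_{x=0}^{x=1}+\varepsilon\int_0^1\frac{\kappa\theta_x^2}{v\theta^{1+\varepsilon}}dx,
\end{equation*}
where the boundary term vanishes because of the insulation condition $\frac{\kappa}{v}\theta_x|_{x=0,1}=0$ in \eqref{boundary1}. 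This produces exactly the dissipation $\varepsilon\int_0^t\int_0^1\frac{\kappa\theta_x^2}{v\theta^{1+\varepsilon}}$, which by the two-sided bound $C_1(T)\le v\le C(T)$ (Lemma \ref{lemma5} and the preceding lemma) is comparable to the quantity to be estimated. It then remains to bound the contributions of $\theta^{-\varepsilon}e_t$, of the mechanical term $\theta^{-\varepsilon}(-p+\mu u_x/v)u_x$, and of the reaction term $\lambda\theta^{-\varepsilon}\phi z^m$. Moving the last to the right it appears with a minus sign and hence is harmless, and likewise the viscous part $-\mu\theta^{-\varepsilon}u_x^2/v\le 0$ may simply be discarded.

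The main work, as in \cite{kawohl,Jiang}, is to absorb the time-derivative term $\theta^{-\varepsilon}e_t$ into a total time derivative. With $e=C_v\theta+av\theta^4$ from \eqref{stateequation4}, I would introduce the auxiliary function
\begin{equation*}
G(v,\theta):=\frac{C_v}{1-\varepsilon}\theta^{1-\varepsilon}+\frac{4a}{4-\varepsilon}v\theta^{4-\varepsilon},
\end{equation*}
a direct computation using $v_t=u_x$ giving the identity $\theta^{-\varepsilon}e_t=G_t-\frac{a\varepsilon}{4-\varepsilon}u_x\theta^{4-\varepsilon}$. Integrating in time converts $\int_0^t\int_0^1\theta^{-\varepsilon}e_t$ into $\int_0^1[G(v,\theta)-G(v_0,\theta_0)]dx$ plus the remainder $-\frac{a\varepsilon}{4-\varepsilon}\int_0^t\int_0^1 u_x\theta^{4-\varepsilon}$. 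The boundary-in-time term is controlled: since $v\le C(T)$ and $\int_0^1\theta^4 dx\le C$ (from the bound on $\int_0^1 e\,dx$ in \eqref{lemma111} together with $e\ge av\theta^4\ge aC_1(T)\theta^4$), one gets $\int_0^1 G(v,\theta)dx\le C$ uniformly in $t$.

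The remaining obstacle, and the crux of the argument, is to control the term $\int_0^t\int_0^1|u_x|\theta^{4-\varepsilon}dxds$, which also arises from the pressure part $\int_0^t\int_0^1\theta^{-\varepsilon}pu_x$ of the mechanical term once one writes $p=R\theta/v+\frac{a}{3}\theta^4$. By Cauchy--Schwarz I would estimate
\begin{equation*}
\int_0^t\int_0^1|u_x|\theta^{4-\varepsilon}dxds\le\left(\int_0^t\int_0^1 u_x^2 dxds\right)^{1/2}\left(\int_0^t\int_0^1\theta^{8-2\varepsilon}dxds\right)^{1/2},
\end{equation*}
the first factor being bounded by \eqref{Lemma71}. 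For the second factor I would pull out a power of $\theta^4$ in space, $\int_0^1\theta^{8-2\varepsilon}dx\le\max_{[0,1]}\theta^{4-2\varepsilon}\int_0^1\theta^4 dx\le C\max_{[0,1]}\theta^{4-2\varepsilon}$, and then invoke Lemma \ref{Lemma52}: since $4-2\varepsilon<4\le q+4$ for every $q\ge 0$, we have $\int_0^t\max_{[0,1]}\theta^{4-2\varepsilon}ds\le C(T)$. This is precisely where the hypothesis $q\ge 0$ (rather than $q\ge 2$ or $q\ge 4$) suffices, and it is what lets the estimate reach the constant-conductivity case. Collecting the bounded contributions and discarding the favorable-sign terms yields $\varepsilon\int_0^t\int_0^1\frac{\kappa\theta_x^2}{v\theta^{1+\varepsilon}}dxds\le C$, and a final use of $v\le C(T)$ converts this into the claimed estimate.
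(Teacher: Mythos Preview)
Your proposal is correct and follows essentially the same route as the paper. Your auxiliary function $G(v,\theta)=\frac{C_v}{1-\varepsilon}\theta^{1-\varepsilon}+\frac{4a}{4-\varepsilon}v\theta^{4-\varepsilon}$ is precisely the paper's $G(v,\theta)=\int_0^{\theta}\xi^{-\varepsilon}e_{\xi}(v,\xi)\,d\xi$ written out explicitly, and the key step---controlling $\int_0^t\int_0^1\theta^{8-2\varepsilon}\,dx\,ds$ via $\int_0^t\max_{[0,1]}\theta^{4-2\varepsilon}\cdot\int_0^1\theta^4\,dx\,ds$ together with Lemma~\ref{Lemma52} and \eqref{Lemma71}---matches the paper's bound $C+C\int_0^t\int_0^1\theta^8\,dx\,ds+C\int_0^t\int_0^1 u_x^2\,dx\,ds\le C$; the only cosmetic difference is that the paper packages the pressure contribution through the thermodynamic identity $e_v=-p+\theta p_\theta$ (so that the cross term appears as $(1-\varepsilon)\int_0^{\theta}\xi^{-\varepsilon}p_\xi\,d\xi\cdot u_x$) whereas you keep $\theta^{-\varepsilon}pu_x$ and the remainder from $G_t$ separate, which leads to the same estimate.
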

\begin{proof}
Let $G(v,\theta)=\int_0^{\theta}\xi^{-\varepsilon}
e_{\xi}(v,\xi)d\xi$. Then
\begin{equation*}
\begin{split}
&G_t+(1-\varepsilon)\int_0^{\theta}\xi^{-\varepsilon}p_{\xi}d\xi
\cdot u_{x}=\frac{\mu
u_x^2}{v\theta^{\varepsilon}}+\left(\frac{\kappa}{v}\theta_x\right)_x\cdot\theta^{-\varepsilon}+\lambda
\phi z^m\theta^{-\varepsilon}
\end{split}
\end{equation*}
Integrating it over $[0,1]\times[0,t]$, we arrive at
\begin{equation*}
\begin{split}
&\varepsilon\int_0^t\int_0^1\frac{\kappa\theta_x^2}{v\theta^{1+\varepsilon}}dxds+
\int_0^t\int_0^1\left(\frac{\mu
u_x^2}{v\theta^{\varepsilon}}+\frac{\lambda \phi
z^m}{\theta^{\varepsilon}}\right)dxds\\
&=\int_0^1Gdx-\int_0^1G_0dx+(1-\varepsilon)\int_0^t\int_0^1\int_0^{\theta}\xi^{-\varepsilon}p_{\xi}d\xi
\cdot u_{x}dxds\\
&\leq
C+C\int_0^t\int_0^1\theta^{8}dxds+C\int_0^t\int_0^1u_x^2dxds\\
&\leq C.
\end{split}
\end{equation*}
\end{proof}
With the proceeding  a priori estimates at hand, we can firstly
improve the integrability of temperature and deduce  that
\begin{equation}
\theta \in L^r(0,T; L^{\infty}[0,1]),\quad 0<r<q+5,\,\,q\geq
0,\label{theta5}
\end{equation}
when the function
$\widetilde{V}(t)=\int_0^1\frac{\kappa\theta_x^2}{\theta^{1+\varepsilon}}dx$
is instead of $V(t)$ in the proof of Lemma \ref{Lemma52}.
Furthermore, we can make  a priori estimates of higher integrability
of $\theta$.
\begin{lemma}
If $0\leq \beta< q+9$ and $q\geq 0$, then it has
\begin{equation}
\begin{split}
\int_0^1\theta^{4n-4}dx+\int_0^t\max_{[0,1]}\theta^{q+4n-4}ds
+\int_0^t\int_0^1\left(|u_x|^n+\kappa\theta^{4n-9}\theta_x^2\right)dxds
\leq C_n,\label{lemma11111}
\end{split}
\end{equation}
for sufficiently large $n>2$.
\end{lemma}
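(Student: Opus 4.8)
The plan is to derive an energy‑type identity by multiplying the energy equation by a suitable power of $\theta$. Recall that the internal energy is $e = C_v\theta + a\theta^4/v$, so $e_t$ contains a dominant term of order $\theta^3\theta_t$ from the radiative part. Multiplying \eqref{sub3} by $\theta^{4n-8}$ (so that the leading radiative contribution becomes $\theta^{4n-5}\theta_t$, whose time‑antiderivative is $\sim\theta^{4n-4}$) and integrating over $[0,1]\times[0,t]$ should produce, on the left, the quantity $\int_0^1\theta^{4n-4}dx$ together with the good dissipation term $\int_0^t\int_0^1\kappa\theta^{4n-9}\theta_x^2\,dxds$ coming from the heat‑diffusion term after integration by parts. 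The boundary terms vanish because $\theta_x=0$ at $x=0,1$ by \eqref{boundary1}.

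Next I would estimate the remaining terms on the right‑hand side. The viscous dissipation term $(\mu u_x/v)u_x\cdot\theta^{4n-8}$ and the pressure work $-p\,u_x\,\theta^{4n-8}$ must be controlled by Young's inequality, splitting off an $\varepsilon|u_x|^n$ piece (to be absorbed once we recognize $\int_0^t\int_0^1|u_x|^n$ on the left) against high powers of $\theta$. The reaction term $\lambda\phi z^m\theta^{4n-8}$ is benign since $0\le z\le 1$ and $\phi$ grows only polynomially in $\theta$ (the Arrhenius exponential is bounded), giving a power of $\theta$ that is handled by the already‑established integrability. The crucial input here is the bound \eqref{theta5}, $\theta\in L^r(0,T;L^\infty[0,1])$ for $r<q+5$, which lets me bound $\int_0^t\max_{[0,1]}\theta^{\,s}\,ds$ for the exponents $s$ that arise, provided $s<q+5$. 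This is precisely where the hypothesis $\beta<q+9$ enters, so that the worst exponent generated by the pressure/reaction terms stays below the available integrability threshold.

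The main obstacle I anticipate is the bookkeeping of exponents so that every term on the right is dominated by a constant times $\big(1+\max_{[0,1]}\theta^{\,\sigma}\big)$ with $\sigma<q+5$ multiplied by a time‑integrable factor, thereby closing a Gronwall‑type inequality for $Y_n(t):=\int_0^1\theta^{4n-4}dx$. Interpolating between $\int_0^1\theta^{4n-4}dx$ and the dissipation $\int_0^t\int_0^1\kappa\theta^{4n-9}\theta_x^2\,dxds$ (which controls $\theta^{(4n-5)/2}$ in $H^1_x$, hence $\max_{[0,1]}\theta^{(4n-5)/2}$ via Sobolev embedding on the interval) should yield the term $\int_0^t\max_{[0,1]}\theta^{q+4n-4}ds$. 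Getting these two interpolation constants to match up while respecting the smallness of $\varepsilon$ needed to absorb the $|u_x|^n$ term is the delicate point; everything else is Young's inequality applied repeatedly.

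Once the integral inequality
\[
Y_n(t)+\int_0^t\!\!\int_0^1\kappa\theta^{4n-9}\theta_x^2\,dxds+\int_0^t\!\!\int_0^1|u_x|^n\,dxds\le C_n+C_n\int_0^t\big(1+\max_{[0,1]}\theta^{\,\sigma}\big)Y_n(s)\,ds
\]
is in hand with $\sigma<q+5$ time‑integrable, Gronwall's lemma closes the estimate and delivers \eqref{lemma11111}. I would carry out the argument for general large $n$ from the outset, since the structure of the exponents is uniform in $n$ and only the constant $C_n$ depends on it.
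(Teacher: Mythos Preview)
Your plan has one genuine gap: there is no $\int_0^t\!\int_0^1|u_x|^n\,dx\,ds$ on the left to absorb into. Multiplying the energy equation \eqref{sub3} by $\theta^{4n-8}$ produces, on the favourable side, only
\[
\int_0^1\theta^{4n-4}\,dx\quad\text{and}\quad\int_0^t\!\int_0^1\frac{\kappa\theta^{4n-9}\theta_x^2}{v}\,dx\,ds,
\]
and nothing involving $|u_x|^n$. The viscous term $\mu u_x^2/v$ multiplied by $\theta^{4n-8}$ sits on the \emph{right}, with a positive sign, so it is a bad term, not a good one. When you apply Young's inequality to the pressure work $\theta^{4n-4}|u_x|$ and the viscous piece $\theta^{4n-8}u_x^2$, you generate an $\varepsilon$-free contribution $C_n\int_0^t\!\int_0^1|u_x|^n$ (or equivalently $C_n\int_0^t\!\int_0^1\theta^{4n}$ if you put all the weight on $\theta$), and you have no mechanism coming from \eqref{sub3} alone to control it.

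The paper closes this loop by bringing in the \emph{momentum} equation. One sets $w(x,t)=\int_0^x u\,d\xi$ and rewrites \eqref{sub2} as the linear parabolic problem
\[
w_t=\frac{\mu}{v}w_{xx}-p+\frac{Gx(1-x)}{2}+p_e,\qquad w|_{x=0,1}=0,
\]
and then invokes the standard $L^p$ maximal regularity estimate for linear parabolic equations to get
\[
\int_0^t\!\int_0^1|u_x|^n\,dx\,ds=\int_0^t\!\int_0^1|w_{xx}|^n\,dx\,ds\le C\Bigl(1+\int_0^t\!\int_0^1 p^{\,n}\,dx\,ds\Bigr)\le C\Bigl(1+\int_0^t\max_{[0,1]}\theta^{4}\!\int_0^1\theta^{4n-4}\,dx\,ds\Bigr).
\]
Substituting this back into the energy inequality yields exactly your target Gronwall structure with coefficient $\max_{[0,1]}(\theta^{4}+\theta^{(\beta-4)_+})$, which is time-integrable by \eqref{theta5} precisely when $(\beta-4)_+<q+5$, i.e.\ $\beta<q+9$. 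Your exponent bookkeeping and your plan for the remaining terms are correct; what is missing is this parabolic $L^p$ step coupling the momentum and energy equations.
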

\begin{proof}
Multiplying $\theta^{4n-8}$ on \eqref{sub3}, we get
\begin{equation}
\begin{split}
&\int_0^1\theta^{4n-4}dx+\int_0^t\int_0^1\frac{\kappa\theta^{4n-9}\theta_x^2}{v}dxds\\
&\leq
C+C\int_0^t\int_0^1\left(\theta^{4n-8}u_x^2+\theta^{4n-4}\left|u_x\right|+\theta^{\beta+4n-8}\right)dxds\\
&\leq C+C_n\int_0^t\int_0^1\left|u_x\right|^ndxds
+C_n\int_0^t\max_{[0,1]}(\theta^4+\theta^{(\beta-4)_{+}})\int_0^1\theta^{4n-4}dxds.\label{lemma1111}\\
\end{split}
\end{equation}
On the other hand, we can rewrite \eqref{sub2} with the initial
boundary data
\begin{equation}
\left\{
\begin{array}{llllllllll}
w_t=\frac{\mu}{v}w_{xx}-p+\frac{Gx(1-x)}{2}+p_e,\\[2mm]
w |_{t=0}=w_0(x),\\[2mm]
w |_{x=0,1}=0,\notag
\end{array}
\right.
\end{equation}
where $w=\int_0^xu d\xi$. The standard $L^p$ estimates of solutions
to linear parabolic  problem give rise to
\begin{equation*}
\begin{split}
\int_0^t\int_0^1|u_x|^ndxds&=\int_0^t\int_0^1|w_{xx}|^ndxds\\
&\leq C \left(1+\int_0^t\int_0^1 p^ndxds\right)\\
&\leq C \left(1+\int_0^t\int_0^1 \theta^{4n}dxds\right)\\
 &\leq C \left(1+\int_0^t\max_{[0,1]}\theta^4\int_0^1
 \theta^{4n-4}dxds\right).\label{u}
\end{split}
\end{equation*}
which together with \eqref{lemma1111} leads to
\begin{equation*}
\begin{split}
&\int_0^1\theta^{4n-4}dx+\int_0^t\int_0^1\kappa\theta^{4n-9}\theta_x^2dxds\\
&\leq
C+C\int_0^t\max_{[0,1]}(\theta^4+\theta^{(\beta-4)_{+}})\int_0^1\theta^{4n-4}dxds,
\end{split}
\end{equation*}
which implies $\theta\in L^{4n-4}[0,1]$ by Gr\"{o}nwall's inequality
which in turn, it gives $u_x\in L^{n}\left((0,1)\times (0,t)\right)$
according to \eqref{u} and in the same manner, it implies that
$\theta\in L^{q+4n-4}(0,T;L^{\infty}[0,1])$. Thus, it finishes the
proof.
\end{proof}


\begin{lemma}\label{lemma9999}
If $0\leq \beta< q+9$ and $q\geq 0$, we arrive at
\begin{equation*}
\int_0^1v_x^2dx+\int_0^t\int_0^1\theta v_x^2dxds\leq
C,\label{Lemma72}
\end{equation*}
\end{lemma}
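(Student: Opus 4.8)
The plan is to control $v_x$ through the standard ``effective viscous flux'' quantity. Since $v_t=u_x$ gives $\mu u_x/v=\mu(\log v)_t$, the viscous term in \eqref{sub2} can be rewritten as $(\mu u_x/v)_x=\mu(v_x/v)_t$, so that the combination $\Phi:=\mu v_x/v-u$ obeys the clean identity
\begin{equation*}
\Phi_t=p_x+G\Big(x-\tfrac12\Big).
\end{equation*}
Expanding $p_x=p_v v_x+p_\theta\theta_x$ with $p_v=-R\theta/v^2$ and $p_\theta=R/v+\tfrac{4a}{3}\theta^3$, and substituting $v_x=\tfrac{v}{\mu}(\Phi+u)$, the $v_x$-part of $p_x$ produces a genuine damping in $\Phi$:
\begin{equation*}
\Phi_t=-\frac{R\theta}{\mu v}\Phi-\frac{R\theta}{\mu v}u+p_\theta\theta_x+G\Big(x-\tfrac12\Big).
\end{equation*}

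Next I would multiply by $\Phi$ and integrate over $[0,1]$, which yields
\begin{equation*}
\frac12\frac{d}{dt}\int_0^1\Phi^2dx+\int_0^1\frac{R\theta}{\mu v}\Phi^2dx
=\int_0^1\Big(-\frac{R\theta u}{\mu v}+p_\theta\theta_x+G\big(x-\tfrac12\big)\Big)\Phi\,dx.
\end{equation*}
Because $v$ is bounded from above and below (Lemma \ref{lemma5} and its companion), the second term on the left dominates $c\int_0^1\theta\Phi^2dx$; this is precisely the dissipation that will furnish the weighted term $\int_0^t\int_0^1\theta v_x^2$ in the claim. On the right, the term $-\int\tfrac{R\theta u}{\mu v}\Phi$ is split by Young's inequality into $\varepsilon\int\theta\Phi^2$ (absorbed by the damping) plus $C_\varepsilon\int\theta u^2$, whose time integral is bounded by $\int_0^t\max_{[0,1]}\theta\,ds\le C$ (Lemma \ref{Lemma52} with $r=1$) since $\int_0^1u^2dx\le C$. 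The gravitational term is $\le C(1+\int\Phi^2)$, harmless for Gr\"onwall.

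The main obstacle is the term $\int_0^1 p_\theta\theta_x\Phi\,dx$, which couples $v_x$ to the temperature gradient. I would again use Young's inequality weighted by $\theta$ to bound it by $\varepsilon\int\theta\Phi^2+C_\varepsilon\int\tfrac{p_\theta^2\theta_x^2}{\theta}dx$. Since $p_\theta^2\le C(1+\theta^6)$, the remaining integrand is $\le C(\theta_x^2/\theta+\theta^5\theta_x^2)$, and the crux is to show $\int_0^t\int_0^1(\theta_x^2/\theta+\theta^5\theta_x^2)dxds\le C$. This is where the earlier temperature-gradient estimates and the restriction $\beta<q+9$ enter: splitting into $\{\theta\ge1\}$ and $\{\theta<1\}$, on $\{\theta<1\}$ both integrands are controlled by $\int_0^t\int_0^1\theta_x^2/\theta^{1+\varepsilon}$ from Lemma \ref{lemma666} (using $\kappa\ge\kappa_1$), while on $\{\theta\ge1\}$ they are dominated by $\int_0^t\int_0^1\theta^{4n-9}\theta_x^2$ from \eqref{lemma11111} once $n$ is taken large enough that $4n-9\ge5$.

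Finally, collecting these bounds gives $\tfrac{d}{dt}\int_0^1\Phi^2dx+c\int_0^1\theta\Phi^2dx\le C\int_0^1\Phi^2dx+g(t)$ with $g\in L^1(0,T)$, so Gr\"onwall's inequality yields $\int_0^1\Phi^2dx\le C$ and $\int_0^t\int_0^1\theta\Phi^2dxds\le C$. Translating back through $v_x=\tfrac{v}{\mu}(\Phi+u)$, hence $v_x^2\le C(\Phi^2+u^2)$, together with $\int_0^1u^2dx\le C$ and $\int_0^t\max_{[0,1]}\theta\,ds\le C$ to handle $\int\int\theta u^2$, delivers both $\int_0^1v_x^2dx\le C$ and $\int_0^t\int_0^1\theta v_x^2dxds\le C$, completing the argument.
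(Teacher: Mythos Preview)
Your proof is correct and is essentially the paper's own argument: your $\Phi=\mu v_x/v-u$ is the negative of the paper's test function $u-\mu v_x/v$, and the resulting energy identity, the $\theta$-weighted damping, and the control of the $p_\theta\theta_x$ forcing via the entropy-type gradient bound together with \eqref{lemma11111} all mirror the paper's computation. The only cosmetic difference is that you weight Young's inequality by $\theta$ (so the cross terms are absorbed directly into the damping and the Gr\"onwall factor is a constant), whereas the paper uses a $(1+\theta^2)$ weight and carries a $\max_{[0,1]}\theta^2$ Gr\"onwall factor; both choices are valid.
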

\begin{proof}
Multiplying \eqref{sub2} by $u-\frac{\mu}{v}v_x$ and integrating it
over $(0,1)$, it yields
\begin{equation}
\begin{split}
&\frac12\frac{d}{dt}\int_0^1\left(u-\frac{\mu}{v}v_x\bigg)^2dx+\int_0^1
\bigg(\frac{\mu R\theta}{v^3}\right)v_x^2dx\\[2mm]
=&\int_0^1\frac{R\theta uv_x}{v^2}dx
-\int_0^1\left[\left(\frac{R}{v}
+\frac{4a}{3}\theta^3\right)\theta_x
+G\left(x-\frac12\right)\right]\left(u-\frac{\mu}{v}v_x\right)dx.\label{lemma8theta}
\end{split}
\end{equation}
 On one hand, we deduce that
\begin{equation}
\begin{split}
&\int_0^1\frac{R\theta uv_x}{v^2}dx\\[2mm]
&\leq \varepsilon \int_0^1\theta
v_x^2dx+C_{\varepsilon}\max_{[0,1]}\theta\cdot\int_0^1u^2dx\\[2mm]
&\leq \varepsilon \int_0^1\theta
v_x^2dx+C_{\varepsilon}\max_{[0,1]}\theta(x,t).\label{lemma8v}
\end{split}
\end{equation}
On the other hand, we get
\begin{equation*}
\begin{split}
&\left|\int_0^1\left[\left(\frac{R}{v}+\frac{4a}{3}\theta^3\right)\theta_x
+G\left(x-\frac12\right)\right]\left(u-\frac{\mu}{v}v_x\right)dx\right|\\[2mm]
&\leq  C
\left[1+\int_0^1\frac{\theta_x^2}{\theta^2}dx+\int_0^1\theta^6\theta_x^2dx
+\int_0^1(1+\theta^2)\left(u-\frac{\mu}{v}v_x\right)^2dx\right]\\[2mm]
&\leq  C
\left[1+\int_0^1\frac{\kappa\theta_x^2}{v\theta^2}dx
+\int_0^1\kappa\theta^{4n-9}\theta_x^2dx+\int_0^1(1+\theta^2)(u^2+v_x^2)dx\right]\\[2mm]
&\leq  C
\left[1+\int_0^1\frac{\kappa\theta_x^2}{v\theta^2}dx+\int_0^1\kappa\theta^{4n-9}\theta_x^2dx
+\left(1+\max_{[0,1]}\theta^2\right)\cdot\left(1+\int_0^1v_x^2dx\right)\right],
\end{split}
\end{equation*}
which together with \eqref{lemma8theta} and \eqref{lemma8v} for
any fixed small $\varepsilon$ leads to
\begin{equation*}
\begin{split}
&\int_0^1v_x^2dx+\int_0^t\int_0^1\theta v_x^2dxds\\[2mm]
 &\leq
C+C\int_0^t\left(\int_0^1\frac{\kappa\theta_x^2}{v\theta^2}dx
+\int_0^1\kappa\theta^{4n-9}\theta_x^2dx+\max_{[0,1]}\theta^2(x,s)\right)ds\\[2mm]
&+
C\int_0^t\left(1+\max_{[0,1]}\theta^2\right)\left(\int_0^1v_x^2dx\right)ds\\[2mm]
&\leq
C+C\int_0^t\left(1+\max_{[0,1]}\theta^2\right)\left(\int_0^1v_x^2dx\right)ds.
\end{split}
\end{equation*}
Thus, the lemma is completed by Gr\"{o}nwall's inequality.
\end{proof}
We introduce the following new variables
\begin{align}
&X:=\int_0^t\int_0^1(1+\theta^{q})\theta_t^2dxds,\label{X}\\
&Y:=\max_{0\leq t\leq
T}\int_0^1(1+\theta^{2q})\theta_x^2dx,\label{Y}\\
&Z:=\max_{0\leq t\leq T}\int_0^1u_{xx}^2dx.\label{Z}
\end{align}

Firstly, with the aid of interpolation and embedding theorem, we
deduce that
\begin{align*}
\left|u\right|^{(0)}\leq C\left(1+Z^{\frac38 }\right)
\end{align*}
by simple calculation.

\begin{lemma}
Under the assumptions of Theorem \ref{thm}, we have
\begin{equation}
\max_{[0,1]\times[0,t]}\theta(x,t)\leq
C+CY^{\frac{1}{2(q+2n-1)}},\label{Lemma82}
\end{equation}
for $(x,t)\in (0,1)\times (0,T)$.
\end{lemma}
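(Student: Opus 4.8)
The plan is to derive the pointwise bound from a mean-value representation of $\theta$ combined with the two a priori estimates already in hand: the weighted gradient quantity $Y$ defined in \eqref{Y} and the $L^{4n-4}$-bound on $\theta$ from the previous lemma, i.e. $\int_0^1\theta^{4n-4}\,dx\le C_n$ in \eqref{lemma11111}. First I would record that $\int_0^1\theta\,dx\le C$: this follows from the basic energy estimate \eqref{lemma111} together with $e\ge C_v\theta$. Exactly as in the proof of Lemma \ref{Lemma52}, the mean value theorem furnishes a point $x(t)\in[0,1]$ for each $t$ with $\theta(x(t),t)=\int_0^1\theta\,dx\le C$.

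Next, for an exponent $s>0$ to be fixed below, I would write the fundamental theorem of calculus in the form
\begin{equation*}
\theta(x,t)^{s}=\theta(x(t),t)^{s}+s\int_{x(t)}^{x}\theta^{s-1}\theta_\xi\,d\xi
\le C+s\int_0^1\theta^{s-1}|\theta_x|\,dx .
\end{equation*}
To bring in $Y$, I split $\theta^{s-1}=\theta^{\,s-1-q}\cdot\theta^{q}$ and apply the Cauchy--Schwarz inequality, using $\int_0^1\theta^{2q}\theta_x^2\,dx\le Y$ (which holds since $(1+\theta^{2q})\theta_x^2\ge\theta^{2q}\theta_x^2$):
\begin{equation*}
\int_0^1\theta^{s-1}|\theta_x|\,dx
\le\left(\int_0^1\theta^{2(s-1-q)}\,dx\right)^{1/2}\left(\int_0^1\theta^{2q}\theta_x^2\,dx\right)^{1/2}
\le Y^{1/2}\left(\int_0^1\theta^{2(s-1-q)}\,dx\right)^{1/2}.
\end{equation*}
The calibration is now forced: choosing $s=q+2n-1$ gives $2(s-1-q)=4n-4$, so the remaining integral is exactly $\int_0^1\theta^{4n-4}\,dx\le C_n$. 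Hence $\theta^{q+2n-1}\le C+CY^{1/2}$ uniformly on $[0,1]\times[0,t]$.

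Finally, raising to the power $1/(q+2n-1)$ and using the subadditivity $(a+b)^{1/p}\le a^{1/p}+b^{1/p}$ for $p\ge1$ yields \eqref{Lemma82}. I do not expect a genuine analytic obstacle here; the one point requiring care is the exponent bookkeeping, namely choosing $s=q+2n-1$ so that the power of $\theta$ produced by Cauchy--Schwarz is precisely $4n-4$, matching the binding $L^{4n-4}$ estimate \eqref{lemma11111} and thereby producing the stated rate $Y^{1/(2(q+2n-1))}$. One should resist taking a larger $s$, which would demand an integrability of $\theta$ beyond what \eqref{lemma11111} provides.
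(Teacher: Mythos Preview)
Your proof is correct and follows essentially the same route as the paper: choose the exponent $s=q+2n-1$, apply Cauchy--Schwarz to pair the factor $(1+\theta)^{2q}\theta_x^2$ (controlled by $Y$) with $(1+\theta)^{4n-4}$ (controlled by \eqref{lemma11111}), and take the $(q+2n-1)$-th root. The only cosmetic difference is that the paper invokes the embedding inequality $\max\theta^{s}\le C\int\theta^{s}+C\int\theta^{s-1}|\theta_x|$ and then absorbs the first term via $\int\theta^{s}\le\max\theta^{s-1}\int\theta\le\varepsilon\max\theta^{s}+C_\varepsilon$, whereas you short-circuit this by working from the mean-value point $x(t)$ as in Lemma~\ref{Lemma52}; the two are equivalent.
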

\begin{proof}
By the embedding theorem, we deduce that
\begin{equation}
\max_{[0,1]}\theta^{q+2n-1}(x,t)  \leq
C\int_0^1\theta^{q+2n-1}dx+C_{n}\int_0^1(1+\theta)^{q+2n-2}|\theta_x|dx,\notag
\end{equation}
which  implies
 that
\begin{equation}
\begin{split}
&\max_{[0,1]}\theta^{q+2n-1}(x,t)\\
&\leq C\max_{[0,1]}\theta^{q+2n-2}\int_0^1\theta
dx+C_n\int_0^1(1+\theta)^{q+2n-2}\left|\theta_x\right|
dx\\
&\leq \varepsilon \max_{[0,1]}\theta^{q+2n-1}
+C_n\left(\int_0^1(1+\theta)^{2q}\theta_x^2dx\right)^{\frac12}
\left(\int_0^1(1+\theta)^{4n-4}dx\right)^{\frac12}+C_{\varepsilon}.\notag
\end{split}
\end{equation}
So we deduce \eqref{Lemma82}.
\end{proof}

\begin{lemma}
 One has
\begin{equation}
X+Y\leq C\left(1+Z^{\frac12}\right). \label{lemma94}
\end{equation}
for  $0\leq\beta<q+9$ and $q\geq 0$.
\end{lemma}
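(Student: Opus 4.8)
The plan is to derive a single energy identity for the temperature by testing the energy balance against the multiplier $\kappa\theta_t$, a choice dictated precisely by the weights in \eqref{X}--\eqref{Y}. First I would use \eqref{stateequation4} and $v_t=u_x$ from \eqref{sub1} to rewrite \eqref{sub3} as the scalar parabolic equation
\begin{equation*}
(C_v+4av\theta^3)\theta_t=\Big(\frac{\kappa}{v}\theta_x\Big)_x+\frac{\mu u_x^2}{v}-\Big(\frac{R\theta}{v}+\frac{4a}{3}\theta^4\Big)u_x+\lambda\phi z^m,
\end{equation*}
then multiply by $\kappa\theta_t$ and integrate over $(0,1)\times(0,t)$, integrating the diffusion term by parts in $x$ (the boundary flux vanishes by \eqref{boundary1}). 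The point of the extra factor $\kappa$ is that on the left the coefficient $(C_v+4av\theta^3)\kappa\ge C_v\kappa_1(1+\theta^q)$ controls $X$, while the diffusion term produces $\tfrac12\tfrac{d}{dt}\int_0^1\tfrac{\kappa^2}{v}\theta_x^2\,dx$ with $\tfrac{\kappa^2}{v}\gtrsim 1+\theta^{2q}$, which after taking the maximum in $t$ controls $Y$; a plain $\theta_t$ would only give the weight $1+\theta^q$ in $Y$.

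The key structural observation I would exploit is a cancellation in the commutator generated by the integration by parts. Writing $\kappa_x=\kappa_\theta\theta_x+\kappa_\rho\rho_x$ with $\rho_x=-v_x/v^2$, the two commutator contributions $-\int_0^1\frac{\kappa\kappa_x}{v}\theta_x\theta_t\,dx$ and $\frac12\int_0^1(\frac{\kappa^2}{v})_t\theta_x^2\,dx$ each contain the cubic term $\frac{\kappa\kappa_\theta}{v}\theta_x^2\theta_t$ with opposite sign, so these cancel exactly. This is crucial: that term would otherwise force control of $\int\!\!\int\theta^{3q-2}\theta_x^4$, which is not available at this stage. What survives are the genuinely mixed terms $\frac{\kappa\kappa_\rho v_x}{v^3}\theta_x\theta_t$, $\frac{\kappa\kappa_\rho u_x}{v^3}\theta_x^2$ and $\frac{\kappa^2 u_x}{v^2}\theta_x^2$, which I would dominate by Young's inequality, absorbing a small multiple of $X+Y$ and estimating the remainders through $\int_0^1 v_x^2\,dx+\int\!\!\int\theta v_x^2\le C$ (Lemma \ref{lemma9999}), the bound $\int\!\!\int\kappa\theta^{4n-9}\theta_x^2\le C_n$ from \eqref{lemma11111}, and Lemma \ref{lemma666}.

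For the four source integrals I would split as follows. The viscous term $\int\!\!\int\frac{\mu u_x^2}{v}\kappa\theta_t$ gives, after Young, $\varepsilon X+C\int\!\!\int(1+\theta^q)u_x^4$; the quartic $\int\!\!\int u_x^4$ is then controlled by the Gagliardo--Nirenberg/Agmon inequality $\|u_x\|_{L^\infty}^2\le C\|u_x\|_{L^2}\|u_{xx}\|_{L^2}+C\|u_x\|_{L^2}^2$ together with \eqref{Lemma71} and the bound $\int\!\!\int|u_x|^n\le C_n$ of \eqref{lemma11111}, and it is here that the factor $\|u_{xx}\|_{L^2}\le Z^{1/2}$ appears. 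The pressure/radiative term $\int\!\!\int(\frac{R\theta}{v}+\frac{4a}{3}\theta^4)u_x\kappa\theta_t$ and the reaction term $\int\!\!\int\lambda\phi z^m\kappa\theta_t$ (where $\phi\le C\theta^{\beta}$ since $\rho$ is bounded and $0\le z\le1$) are absorbed into $\varepsilon X$ plus constants, the temperature powers being kept under the time integral and bounded by $\int_0^t\max_{[0,1]}\theta^{q+4n-4}\,ds\le C_n$ (Hölder in $t$ and $x$) with $n$ fixed large; this is where the hypothesis $\beta<q+9$ enters, ensuring the required exponents are admissible. Any stray prefactor of the form $\max_{[0,1]}\theta^{q}$ I would convert via \eqref{Lemma82} into a small power $Y^{q/(2(q+2n-1))}$ and absorb by Young.

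Collecting everything, absorbing the $\varepsilon(X+Y)$ contributions into the left-hand side, and taking the maximum over $t\in[0,T]$ yields \eqref{lemma94}. I expect the main obstacle to be the bookkeeping of the mixed $\theta$--$v$--$u$ products in the second and third steps: the argument hinges on (i) the exact cancellation of the cubic $\theta_x^2\theta_t$ commutator, without which the $\theta_x^4$ term is fatal, and (ii) keeping every temperature power under a time integral governed by the high-integrability estimate \eqref{lemma11111} with $n$ large, so that the only quantity left free on the right is the single Gagliardo--Nirenberg factor $Z^{1/2}$.
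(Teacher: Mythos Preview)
Your strategy of testing with $\kappa\theta_t$ and exploiting the cancellation of the cubic term $\frac{\kappa\kappa_\theta}{v}\theta_x^2\theta_t$ is correct and is a genuine alternative to the paper's choice of multiplier $K_t=\frac{\kappa}{v}\theta_t+K_v u_x$, where $K(v,\theta)=\int_0^\theta\frac{\kappa(v,\xi)}{v}\,d\xi$.  The paper's $K_t$ makes the quadratic $\big(\frac{\kappa}{v}\theta_x\big)^2$ appear \emph{exactly} (no cancellation needed), and the extra piece $K_v u_x$ of the multiplier, after one integration by parts, produces a term $\int\!\!\int\frac{\kappa}{v}\theta_x\,K_v\,u_{xx}$; this is where the factor $Z^{1/2}$ enters in the paper, via $\big(\int\!\!\int(1+\theta)^{3q+4}u_{xx}^2\big)^{1/2}\le CZ^{1/2}$.

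The gap in your proposal is the bookkeeping of the surviving commutator term $\frac12\int\!\!\int\frac{\kappa^2 u_x}{v^2}\theta_x^2$.  You assert it can be absorbed into $\varepsilon(X+Y)$ plus constants using only Lemmas~\ref{lemma666}, \ref{lemma9999} and \eqref{lemma11111}, and that the sole appearance of $Z$ is through the viscous quartic $\int\!\!\int(1+\theta^q)u_x^4$.  Both claims fail.  First, the quartic is already bounded by a constant: by H\"older and \eqref{lemma11111} one has $\int\!\!\int(1+\theta^q)u_x^4\le C_n$ for $n$ large, with no $Z$ involved.  Second, the commutator term carries the full weight $(1+\theta^{2q})\theta_x^2$, the same as in $Y$, multiplied by $|u_x|$; there is no way to make the coefficient small, so it cannot be absorbed into $\varepsilon Y$.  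What actually closes your estimate is to pull out $|u_x|$ in $L^\infty_{t,x}$ (or use $\int_0^t\|u_x\|_{L^\infty}^2\,ds\le CZ^{1/2}$ after Gagliardo--Nirenberg), together with the bound $\int_0^t\int_0^1(1+\theta^{2q})\theta_x^2\,dx\,ds\le C$ coming from \eqref{lemma11111} and Lemma~\ref{lemma666}.  In other words, in your route the $Z$-dependence enters precisely through this commutator term, not through the viscous one.  You should also note that the remaining mixed term $\int\!\!\int\frac{\kappa\kappa_\rho v_x}{v^3}\theta_x\theta_t$ is the analogue of the paper's hardest piece (their estimate of $\int_0^t\max_x(\frac{\kappa}{v}\theta_x)^2$ via embedding and \eqref{sub3}); your one-line ``Young plus Lemma~\ref{lemma9999}'' is not enough there either, and an argument of that type is still required.
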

\begin{proof}
We introduce the function as in \cite{kawohl,umeharaTani}
\begin{equation}
K(v,\theta):=\int_0^{\theta}\frac{\kappa(v,\xi)}{v}d\xi. \notag
\end{equation}
The simple calculation leads to
\begin{align}
&K_t=\frac{\kappa}{v}\theta_t+K_vu_x, \label{K1}\\
&K_{xt}=\left(\frac{\kappa}{v}\theta_x\right)_t+K_{vv}v_{x}u_{x}+K_{v}u_{xx}+\left(\frac{\kappa}{v}\right)_vv_x
\theta_t,\label{K2}\\
&|K_v|, |K_{vv}|\leq C\left(1+\theta\right)^{q+1}.\label{K3}
\end{align}
Multiplying \eqref{sub3} by $K_t$ and integrating it over
$(0,1)\times (0,t)$, we get
\begin{equation}
\begin{split}
&\int_0^t\int_0^1\frac{\kappa e_{\theta}\theta_t^2}{v}dxd\tau
+\int_0^t\int_0^1\frac{\kappa}{v}\theta_x\left(\frac{\kappa}{v}\theta_x\right)_tdxd\tau\\
&=-\int_0^t\int_0^1e_{\theta}\theta_tK_v
u_xdxd\tau-\int_0^t\int_0^1\left(\theta
p_{\theta}u_x-\frac{\mu}{v}u_x^2\right)\frac{\kappa}{v}\theta_tdxd\tau\\
&- \int_0^t\int_0^1\left(\theta
p_{\theta}u_x-\frac{\mu}{v}u_x^2\right)K_vu_xdxd\tau\
-\int_0^t\int_0^1\frac{\kappa}{v}\theta_xK_{vv}v_xu_xdxd\tau\\
&-\int_0^t\int_0^1\frac{\kappa}{v}\theta_x K_v u_{xx}dxd\tau
-\int_0^t\int_0^1\frac{\kappa}{v}\theta_x\left(\frac{\kappa}{v}\right)_v
v_{x}\theta_tdxd\tau\\
&+\lambda\int_0^t\int_0^1 \phi z^mK_t dxd\tau.\label{lemma91}
\end{split}
\end{equation}

In the sequel, we will estimate all terms in \eqref{lemma91}.

Firstly, we have
\begin{equation}
\int_0^t\int_0^1\frac{\kappa e_{\theta}\theta_t^2}{v}dxd\tau \geq C
\int_0^t\int_0^1(1+\theta^3)(1+\theta^q)\theta_t^2dxd\tau\geq CX,
\end{equation}
and
\begin{equation}
\begin{split}
&\int_0^t\int_0^1\frac{\kappa}{v}\theta_x\left(\frac{\kappa}{v}\theta_x\right)_{t}dxd\tau\\
&=\frac12\int_0^1\left(\frac{\kappa}{v}\theta_x\right)^2dx
-\frac12\int_0^1\left(\frac{\kappa}{v}\theta_x\right)^2(x,0)dx\\
&\geq C\int_0^1(1+\theta^{q})^2\theta_x^2dx-C\geq CY-C,
\end{split}
\end{equation}
by the definition of \eqref{X} and \eqref{Y}.

Secondly, we find that
\begin{equation}
\begin{split}
&\left|\int_0^t\int_0^1e_{\theta}\theta_tK_v u_xdxd\tau\right| \\
&\leq C\int_0^t\int_0^1\left(1+\theta\right)^{q+4}|\theta_tu_x|dxd\tau\\
&\leq \varepsilon X
+C_{\varepsilon}Y^{\frac{q+8}{2(q+2n-1)}}\int_0^t\int_0^1u_x^2dxd\tau\\
&\leq \varepsilon (X+Y)+C,\notag
\end{split}
\end{equation}

Similarly, we have
\begin{equation}
\begin{split}
&\left|\int_0^t\int_0^1\left(\theta
p_{\theta}u_x-\frac{\mu}{v}u_x^2\right)\frac{\kappa}{v}\theta_tdxd\tau\right|\\
&\leq C(T)\int_0^t\int_0^1\left[(1+\theta)^{q+4}|u_x\theta_t|+(1+\theta)^q|\theta_t|u_x^2\right]dxd\tau\\
&\leq \varepsilon
X+C_{\varepsilon}\left|\left(1+\theta^{q+8}\right)\right|^{(0)}\int_0^t\int_0^1u_x^2dxd\tau\\
&+C_{\varepsilon}|(1+\theta)^{q}|^{(0)}\int_0^t\int_0^1u_x^4dxd\tau\\
&\leq \varepsilon (X+Y). \notag
\end{split}
\end{equation}
and
\begin{equation}
\begin{split}
&\left|\int_0^t\int_0^1\left(\theta
p_{\theta}u_x-\frac{\mu}{v}u_x^2\right)K_vu_xdxd\tau\right|\\
&\leq C\int_0^t\int_0^1\left[(1+\theta)^{q+5}u_x^2+(1+\theta)^{q+1}
\left|u_x\right|^3\right]dxd\tau\\
&\leq C
\left|(1+\theta^{q+5})\right|^{(0)}\int_0^t\int_0^1u_x^2dxd\tau\\
&+\left|\left(1+\theta\right)^{q+1}\right|^{(0)}\int_0^t\int_0^1|u_x|^3dxd\tau+C\\
&\leq \varepsilon Y+C.\notag
\end{split}
\end{equation}

Likewise, we get
\begin{equation}
\begin{split}
&\left|\int_0^t\int_0^1\frac{\kappa}{v}\theta_xK_{vv}v_xu_xdxd\tau\right|\\
&\leq C\int_0^t\int_0^1\left(1+\theta\right)^{2q+1}|\theta_xv_xu_x|dxd\tau\\
&\leq C\left|u_x\right|^{(0)}
\left(\int_0^t\int_0^1\frac{\kappa\theta_x^2}{v\theta^2}dxd\tau\right)^{\frac12}
\left(\int_0^t\max_{[0,1]}(1+\theta)^{3q+4}\int_0^1v_{x}^2dxd\tau\right)^{\frac12}\\
&\leq
CZ^{\frac38}.\notag
\end{split}
\end{equation}

In the same manner, we find that
\begin{equation}
\begin{split}
&\left|\int_0^t\int_0^1\frac{\kappa}{v}\theta_x K_v u_{xx}dxd\tau\right|\\
&\leq
C\left(\int_0^t\int_0^1\frac{\kappa\theta_x^2}{v\theta^2}dxd\tau\right)^{\frac12}
\left(\int_0^t\max_{[0,1]}(1+\theta)^{3q+4}\int_0^1u_{xx}^2dxd\tau\right)^{\frac12}\\
&\leq CZ^{\frac12}.\\
\label{Lemma103}\notag
\end{split}
\end{equation}

At last, one has
\begin{equation}
\begin{split}
&\left|\int_0^t\int_0^1\frac{\kappa}{v}\theta_x\left(\frac{\kappa}{v}\right)_v
v_{x}\theta_tdxd\tau\right|\\
&\leq C\int_0^t\int_0^1\left(1+\theta\right)^{2q}\left|\theta_x v_x \theta_t\right|dxd\tau\\
&\leq \varepsilon
X+C_{\varepsilon}\int_0^t\max_{[0,1]}\left(\frac{\kappa\theta_x}{v}\right)^2
\left(\int_0^1\frac{(1+\theta)^{3q}v_x^2v^2}{\kappa^2}dx\right)d\tau\\
&\leq \varepsilon
X+C_{\varepsilon}|\left(1+\theta\right)^{q}|^{(0)}\int_0^t\max_{[0,1]}\left(\frac{\kappa\theta_x}{v}\right)^2\left(\int_0^1v_x^2dx\right)d\tau\\
&\leq \varepsilon
X+C_{\varepsilon}Y^{\frac{q}{2(q+2n-1)}}\int_0^t\max_{[0,1]}\left(\frac{\kappa\theta_x}{v}\right)^2d\tau.
\label{lemma92}
\end{split}
\end{equation}

The next goal  is to show  the estimation of the last term on the
right hand side of the above inequality. From the embedding theorem,
it yields
\begin{equation}
\begin{split}
&\int_0^t\max_{[0,1]}\left(\frac{\kappa\theta_x}{v}\right)^2d\tau\\
&\leq
C\int_0^t\left(\int_0^1\left(\frac{\kappa\theta_x}{v}\right)^2dx
+\int_0^1\left|\frac{\kappa\theta_x}{v}\cdot\bigg(\frac{\kappa\theta_x}{v}\bigg)_x\right|dx\right)d\tau,\\
&\leq
C\int_0^t\int_0^1\left(\frac{\kappa\theta_x}{v}\right)^2dxd\tau
+C\left(\int_0^t\int_0^1\left(\frac{\kappa\theta_x}{v}\right)^2dxd\tau\right)^{\frac12}
\left(\int_0^t\int_0^1\left(\frac{\kappa\theta_x}{v}\right)_x^2dxd\tau\right)^{\frac12}\\
&\leq CY^{\frac{q+2}{2(q+2n-1)}}\int_0^t\int_0^1\frac{\kappa
\theta_x^2}{v\theta^2}dxd\tau\\
&+CY^{\frac{q+2}{4(q+2n-1)}}\left(\int_0^t\int_0^1\frac{\kappa
\theta_x^2}{v\theta^2}dxd\tau\right)^{\frac12}\left(\int_0^t\int_0^1\left(\frac{\kappa\theta_x}{v}\right)_x^2dxd\tau\right)^{\frac12}
\\
&\leq
CY^{\frac{q+2}{2(q+2n-1)}}+CY^{\frac{q+2}{4(q+2n-1)}}\left(\int_0^t\int_0^1(e_{\theta}^2\theta_t^2
 +\theta^2
p_{\theta}^2u_x^2+u_x^4+\phi^2z^{2m})dxds\right)^{\frac12}\label{lemma101}
\end{split}
\end{equation}

In the following, we derive the estimations of all terms of the
right hand side in \eqref{lemma101}.

Firstly, from \eqref{X} and \eqref{Lemma82}, we get
\begin{equation*}
\begin{split}
&\int_0^t\int_0^1(1+\theta)^{q+2}e_{\theta}^2\theta_{t}^2dxd\tau\\
&\leq
C\left|(1+\theta)^8\right|^{(0)}\int_0^t\int_0^1\left(1+\theta\right)^{q}\theta_{t}^2dxd\tau\\[2mm]
&\leq C\left|(1+\theta)^8\right|^{(0)}X\\[2mm]
&\leq C \left(X+Y^{\frac{8}{2(q+2n-1)}}X\right).\label{Lemma101}
\end{split}
\end{equation*}
Similarly, recalling \eqref{Lemma71} and \eqref{Lemma82}, we deduce
that
\begin{equation*}
\begin{split}
&\int_0^t\int_0^1(1+\theta)^{q+2}\theta^2 p_{\theta}^2u_x^2dxd\tau\\
&\leq C\int_0^t\int_0^1\left(1+\theta\right)^{q+10}u_x^2dxd\tau\\
&\leq C\left(1+Y^{\frac{q+10}{2(q+2n-1)}}\right),
\end{split}
\end{equation*}
and
\begin{equation*}
\begin{split}
&\int_0^t\int_0^1\left(1+\theta\right)^{q+2}u_x^4dxd\tau\\
&\leq C\left|(1+\theta)^{q+2}\right|^{(0)}\int_0^t\int_0^1u_x^4dxd\tau\\
&\leq C\left(1+Y^{\frac{q+2}{2(q+2n-1)}}\right).
\end{split}
\end{equation*}
At the same time, we obtain
\begin{equation*}
\begin{split}
&\int_0^t\int_0^1\left(1+\theta\right)^{q+2}\phi^2z^{2m}dxd\tau \\
&\leq
C\left|(1+\theta)^{q+2+2\beta}\right|^{(0)}\\
&\leq CY^{\frac{q+2+2\beta}{2(q+2n-1)}}+C, \label{Lemma102}
\end{split}
\end{equation*}
Thus
\begin{equation}
\begin{split}
&Y^{\frac{q}{2(q+2n-1)}}\int_0^t\max_{[0,1]}\left(\frac{k}{v}\theta_x\right)^2d\tau\\
 &\leq CY^{\frac{q+1}{q+2n-1}}+
CY^{\frac{3q+2}{4(q+2n-1)}}\left(1+Y^{\frac{8}{4(q+2n-1)}}X^{\frac12}+X^{\frac12}
+Y^{\frac{q+10}{4(q+2n-1)}}+Y^{\frac{q+2+2\beta}{4(q+2n-1)}}\right)\\
&\leq \varepsilon (X+Y)+C, \notag
\end{split}
\end{equation}
which together with \eqref{lemma92} leads to
\begin{equation}
\left|\int_0^t\int_0^1\frac{k}{v}\theta_x\left(\frac{k}{v}\right)_v
v_{x}\theta_tdxd\tau\right|\leq \varepsilon (X+Y)+C.
\label{Lemma104}
\end{equation}
Finally, the term $\lambda\int_0^t\int_0^1\phi z^m K_tdxd\tau$
gives,
\begin{equation}
\begin{split}
&\lambda\int_0^t\int_0^1 \phi z^m K_tdxd\tau\\
 &\leq C\int_0^t\int_0^1\phi z^mk\left|\theta_t\right|dxd\tau
+C\int_0^t\int_0^1\theta^{q+1}\left| u_x\right|\phi z^mdxd\tau\\
&\leq \varepsilon
X+\left|(1+\theta)^{q+\beta}\right|^{(0)}\int_0^t\int_0^1\phi z^{2m}dxd\tau
+C\left|\theta^{q+\beta+1}\right|^{(0)}\int_0^t\int_0^1\left|u_x\right|dxd\tau\\
&\leq \varepsilon X+CY^{\frac{q+\beta+1}{2(q+2n-1)}}\\
 &\leq \varepsilon \left(X+Y\right)+C. \label{lemma93}
\end{split}
\end{equation}
Collecting \eqref{lemma91}--\eqref{lemma93}, we deduce
\eqref{lemma94}.
\end{proof}
\begin{lemma}\label{lemma1313}
We have
\begin{equation*}
\int_0^1\left(u_x^2+\theta_x^2+u_{xx}^2+
u_t^2\right)dx+\int_0^t\int_0^1\left(\theta_t^2+u_{xt}^2\right)dxd\tau\leq
C(T),
\end{equation*}
and
\begin{equation*}
\left|u_x\right|^{(0)}+\left|u\right|^{(0)}+\left|\theta\right|^{(0)}\leq
C(T). \label{lemma3111}
\end{equation*}
\end{lemma}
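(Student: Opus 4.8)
The plan is to close a bootstrap among the three quantities $X$, $Y$, $Z$ from \eqref{X}--\eqref{Z}. Note first that every term in the statement is already dominated by these: $\int_0^1\theta_x^2\,dx\le Y$, $\int_0^t\int_0^1\theta_t^2\,dxd\tau\le X$, $\int_0^1u_{xx}^2\,dx\le Z$, while the preceding lemmas give $|u|^{(0)}\le C(1+Z^{3/8})$ and $\max\theta\le C+CY^{1/(2(q+2n-1))}$. Hence it suffices to prove $X+Y+Z\le C(T)$. Once this holds, the one-dimensional embedding $H^1\hookrightarrow L^\infty$ yields $|u_x|^{(0)2}\le C(\int_0^1u_x^2\,dx+\int_0^1u_{xx}^2\,dx)\le C$, hence also $\int_0^1u_x^2\,dx\le C$, and all remaining sup-norm bounds follow. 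The one inequality I would quote directly is \eqref{lemma94}, namely $X+Y\le C(1+Z^{1/2})$, so the whole task reduces to bounding $Z$ in return.

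First I would estimate the time derivative of $u$. Differentiating \eqref{sub2} in $t$, using $v_t=u_x$, multiplying by $u_t$ and integrating over $(0,1)$, the boundary terms vanish because differentiating $-p+\mu u_x/v=-p_e$ gives $-p_t+\mu u_{xt}/v-\mu u_x^2/v^2=0$ at $x=0,1$; what remains is
\[
\tfrac12\tfrac{d}{dt}\int_0^1u_t^2\,dx+\mu\int_0^1\tfrac{u_{xt}^2}{v}\,dx=\int_0^1\Big(p_t+\tfrac{\mu u_x^2}{v^2}\Big)u_{xt}\,dx .
\]
Absorbing $u_{xt}$ by Young's inequality and using $p_t=-R\theta u_x/v^2+(R/v+\tfrac{4a}{3}\theta^3)\theta_t$, the right side is controlled by $C\int_0^1(\theta^2u_x^2+(1+\theta^6)\theta_t^2+u_x^4)\,dx$. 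Integrating in time and invoking \eqref{Lemma71} for $\int\!\int u_x^2$, \eqref{lemma11111} for $\int\!\int u_x^4$, and the pointwise bound \eqref{Lemma82} to extract the surplus powers of $\theta$ as a factor $Y^{\varepsilon}$ with $\varepsilon=O(1/n)$, I obtain $\sup_t\int_0^1u_t^2\,dx+\int_0^t\int_0^1u_{xt}^2\,dxd\tau\le C(1+Y^{\varepsilon}(1+X))$.

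Next I would bound $Z$ by solving \eqref{sub2} for the second derivative: $\mu u_{xx}/v=u_t+\mu u_xv_x/v^2+p_x+G(x-\tfrac12)$, so that $\int_0^1u_{xx}^2\,dx\le C\int_0^1(u_t^2+u_x^2v_x^2+p_x^2+1)\,dx$. Lemma \ref{lemma9999} furnishes $\int_0^1v_x^2\,dx\le C$; the embedding gives $|u_x^2|^{(0)}\le C(1+Z^{1/2})$, whence $\int_0^1u_x^2v_x^2\,dx\le C(1+Z^{1/2})$; and with $p_x=-R\theta v_x/v^2+(R/v+\tfrac{4a}{3}\theta^3)\theta_x$ the pointwise temperature bound \eqref{Lemma82} turns the surplus $\theta$-powers into factors $Y^{\varepsilon}$, giving $\int_0^1p_x^2\,dx\le C(1+Z^{(1+\varepsilon)/2})$. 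Combining with the previous step yields $Z\le C(1+Z^{(1+\varepsilon)/2})$. Since $n$ may be taken arbitrarily large, $\varepsilon$ is as small as we wish, so $(1+\varepsilon)/2<1$ and Young's inequality absorbs the right-hand side to give $Z\le C$; feeding this into \eqref{lemma94} gives $X+Y\le C$, and then \eqref{Lemma82} bounds $\max\theta$ and $|u|^{(0)}\le C(1+Z^{3/8})\le C$.

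The hard part is the power counting in the two reduction steps rather than any single computation. The natural right-hand sides carry strictly more powers of $\theta$ than the weights $1+\theta^q$ and $1+\theta^{2q}$ built into $X$ and $Y$ (for instance $\theta^6\theta_t^2$ against $\theta^q\theta_t^2$, and $\theta^6\theta_x^2$ against $\theta^{2q}\theta_x^2$), so one must use $\max\theta\le C+CY^{1/(2(q+2n-1))}$ to convert every excess into a factor $Y^{\varepsilon}$ with $\varepsilon\to0$ as $n\to\infty$; the reaction terms are handled the same way, and it is here that the hypothesis $0\le\beta<q+9$ keeps those contributions within budget. The delicate point is to verify that after these substitutions every exponent of $Y$ and $Z$ stays strictly below the threshold that would prevent \eqref{lemma94} from being inverted, so that choosing $n$ large enough makes the bootstrap close.
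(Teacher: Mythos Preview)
Your strategy is exactly the paper's: differentiate \eqref{sub2} in $t$, test with $u_t$ to control $\int u_t^2+\int\!\int u_{xt}^2$ in terms of $X,Y$; then read $u_{xx}$ off the momentum equation, bound $\int u_{xx}^2$ by $\int u_t^2+\int p_x^2+\int u_x^2v_x^2$, and close via \eqref{lemma94}. The power-counting philosophy you describe matches the paper's as well.

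One exponent is off. You assert $|u_x^2|^{(0)}\le C(1+Z^{1/2})$ from the $H^1\hookrightarrow L^\infty$ embedding, but at this stage $\int_0^1 u_x^2\,dx$ is not yet known to be bounded uniformly in $t$ (only $\int_0^t\int_0^1 u_x^2\,dxds$ is), so that embedding does not give $Z^{1/2}$. What is available is the Gagliardo--Nirenberg inequality $\|u_x\|_{L^\infty}\le C\|u_{xx}\|_{L^2}^{3/4}\|u\|_{L^2}^{1/4}+C\|u\|_{L^2}$ together with $\|u\|_{L^2}\le C$ from \eqref{lemma111}, yielding $|u_x^2|^{(0)}\le C(1+Z^{3/4})$. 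This is precisely what the paper uses, arriving at $Z\le C(1+Z^{3/4})$ rather than your $Z\le C(1+Z^{(1+\varepsilon)/2})$. The slip is harmless for the conclusion---$3/4<1$ still lets Young's inequality absorb the term---but your stated inequality is not justified as written.
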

\begin{proof}
We differentiate \eqref{sub2} by $t$ and then multiply the equality
by $u_t$ to obtain  by integrating it over $[0,1]$,
\begin{equation}
\frac{d}{dt}\int_0^1\frac{u_t^2}{2}dx+\mu
\int_0^1\frac{u_{xt}^2}{v}dx=\int_0^1\left(p_tu_{xt}+\frac{\mu}{v^2}u_x^2u_{xt}\right)dx.\notag
\end{equation}
 Integrating it over $[0,t]$ and using the Young inequality, we
get
\begin{equation}
\begin{split}
&\int_0^1u_t^2dx+\int_0^t\int_0^1u_{xt}^2dxds\\[2mm]
&\leq
C+C\int_0^t\int_0^1\left(p_t^2+u_x^4\right)dxd\tau\\[2mm]
&\leq C+C\int_0^t\int_0^1(1+\theta^6)\theta_t^2dxd\tau
+C\left|\theta^2\right|^{(0)}\int_0^t\int_0^1u_x^2dxd\tau\\[2mm]
&~~+C\int_0^t\int_0^1u_{x}^4dxd\tau. \notag
\end{split}
\end{equation}
 and hence
\begin{equation}
\begin{split}
&\int_0^1u_t^2dx+\int_0^t\int_0^1u_{xt}^2dxds\\[2mm]
&\leq C\left(1+X+Y^{\frac{3}{q+2n-1}}X+Y^{\frac{1}{q+2n-1}}\right)\\[2mm]
&\leq C\left(1+Z^{\frac12\times \frac{q+2n+2}{q+2n-1}}\right).\notag
\end{split}
\end{equation}
On the other hand, the momentum equation \eqref{sub2} implies
\begin{equation}
\begin{split}
\int_0^1u_{xx}^2dx
&\leq C+C\int_0^1(u_t^2+p_x^2+u_x^2v_x^2)dx\\
 &\leq C
\left(1+\int_0^1u_t^2dx+\int_0^1\left(1+\theta^6\right)\theta_x^2dx
+\left(\left|\theta^2\right|^{(0)}+\left|u_x^2\right|^{(0)}\right)\int_0^1v_x^2dx\right)\\
&\leq
C\left(1+Z^{\frac12\times \frac{q+2n+2}{q+2n-1}}+Z^{\frac34}\right)\\
&\leq
C\left(1+Z^{\frac34}\right)\\
 \notag
\end{split}
\end{equation}
It follows from this that
\begin{equation}
Z\leq C\left(1+Z^{\frac34}\right),\notag
\end{equation}
which implies the boundedness of $Z$ by Young inequality  when
$q\geq 0$ and $n$ is sufficiently large. So are $X$ and $Y$  by
\eqref{lemma94}. Furthermore, it holds for $|u_x|^{(0)}$,
$|\theta|^{(0)}$, $\int_0^1(\theta_x^2+u_t^2+u_{xx}^2)dx$ and
$\int_0^t\int_0^1(\theta_t^2+u_{xt}^2)dxd\tau$. so is $u$ by the
embedding theorem. This is complete.
\end{proof}

With the aid of the above a priori estimates, we further deduce a
priori estimates of  higher derivatives of $(\rho,u,\theta,z)$ which
is analogous to the reference \cite{umeharaTani}, and then we omit
it proofs for brevity.
\begin{lemma}
It satisfies when $0\leq \beta<q+9$ and $q\geq 0$.
\begin{align*}
&\theta(x,t)\geq C,\\
&\int_0^1\left(z_x^2+z_{xx}^2+z_t^2\right)dx+\int_0^t\int_0^1z_{xt}^2dxd\tau\leq
C,\\
&\int_0^1(\theta_{xx}^2+\theta_{xt}^2)dx+\int_0^t\int_0^1\theta_{xt}^2dxd\tau\leq
C,
\end{align*}
\end{lemma}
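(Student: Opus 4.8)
The plan is to treat the three assertions in turn, exploiting that Lemma~\ref{lemma1313} now furnishes the uniform bounds $|u|^{(0)}+|u_x|^{(0)}+|\theta|^{(0)}\le C$ together with $\int_0^1(u_{xx}^2+u_t^2+\theta_x^2)\,dx\le C$ and $\int_0^t\int_0^1(u_{xt}^2+\theta_t^2)\,dxds\le C$, that Lemma~\ref{lemma9999} gives $\int_0^1 v_x^2\,dx\le C$, and that Lemma~\ref{lemma5} and the subsequent lemma give $0<C_1(T)\le v\le C(T)$. Since $\kappa\ge\kappa_1>0$ and $e_\theta=C_v+4a\theta^3v$ is bounded below by a positive constant, the energy equation \eqref{sub3} is uniformly parabolic with bounded data; everything below is carried out on the smooth local solution.

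First I would establish the lower bound $\theta\ge C>0$ by a minimum-principle/ODE-comparison argument. Using $e_t=e_\theta\theta_t+a\theta^4u_x$ (from \eqref{stateequation4} and $v_t=u_x$), equation \eqref{sub3} becomes
\[
e_\theta\theta_t=\Bigl(\frac{\kappa}{v}\theta_x\Bigr)_x+\frac{\mu u_x^2}{v}-\frac{R\theta}{v}u_x-\frac{4a}{3}\theta^4u_x+\lambda\phi z^m .
\]
Set $m(t):=\min_{x\in[0,1]}\theta(\cdot,t)$. At a point where the minimum is attained one has $\theta_x=0$ and $(\tfrac{\kappa}{v}\theta_x)_x=\tfrac{\kappa}{v}\theta_{xx}\ge0$ (the zero-flux condition \eqref{boundary1} makes the same conclusion valid at $x=0,1$), while $\tfrac{\mu u_x^2}{v}\ge0$ and $\lambda\phi z^m\ge0$. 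Estimating the two sign-indefinite terms by Young's inequality, $-\tfrac{R\theta}{v}u_x\ge-\tfrac{\mu u_x^2}{2v}-Cm^2$ and $-\tfrac{4a}{3}\theta^4u_x\ge-Cm^4\ge-Cm^2$ (using $|u_x|^{(0)}\le C$ and $m\le|\theta|^{(0)}\le C$), one obtains $m'(t)\ge -Cm^2$ in the sense of Dini derivatives. Since $m(0)=\min\theta_0>0$, comparison with $y'=-Cy^2$ yields $m(t)\ge m(0)/(1+Cm(0)t)>0$, i.e.\ $\theta\ge C(T)>0$ on $\overline{Q}_T$.

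For the reactant I would run the standard parabolic energy scheme on \eqref{sub4}, whose coefficient $d/v^2$ is bounded above and below. Multiplying by $z_t$ and integrating by parts (the boundary term vanishes since $\tfrac{d}{v^2}z_x|_{x=0,1}=0$) gives $\tfrac12\tfrac{d}{dt}\int_0^1\tfrac{d}{v^2}z_x^2\,dx+\tfrac12\int_0^1 z_t^2\,dx\le C\int_0^1 z_x^2\,dx+C$, so the basic bound \eqref{Lemma332} and Gr\"onwall yield $\sup_t\int z_x^2\,dx+\int_0^t\int z_t^2\,dxds\le C$. Differentiating \eqref{sub4} in $t$, multiplying by $z_t$, integrating by parts, and absorbing (using $|\partial_t(d/v^2)|\le C$, $|\phi_t|\le C(1+|\theta_t|)$, the initial bound on $\int z_t^2|_{t=0}$ from the smooth data, and $\int_0^t\int(u_x^2+\theta_t^2)\le C$) produces $\sup_t\int z_t^2\,dx+\int_0^t\int z_{xt}^2\,dxds\le C$. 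Finally, reading $z_{xx}=\tfrac{v^2}{d}\bigl(z_t+\phi z^m+\tfrac{2d}{v^3}v_xz_x\bigr)$ off the equation and controlling $\int v_x^2z_x^2\,dx$ by the one-dimensional interpolation $|z_x|_{L^\infty}^2\le C\|z_x\|_{L^2}\|z_{xx}\|_{L^2}+C\|z_x\|_{L^2}^2$ together with $\int v_x^2\le C$ closes $\int z_{xx}^2\,dx\le C$ via Young's inequality.

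The temperature step is parallel but is the main obstacle. I would differentiate the displayed form of \eqref{sub3} in $t$, multiply by $\theta_t$, and integrate by parts; the boundary term drops because $(\tfrac{\kappa}{v}\theta_x)_t=0$ at $x=0,1$. This gives $\tfrac12\tfrac{d}{dt}\int e_\theta\theta_t^2\,dx+\int\tfrac{\kappa}{v}\theta_{xt}^2\,dx$ equal to terms containing $(e_\theta)_t\theta_t^2$, $(\tfrac{\kappa}{v})_t\theta_x\theta_{xt}$ and $H_t\theta_t$, where $H$ collects the bounded lower-order right-hand side. Each is handled by Young's inequality and the Gagliardo--Nirenberg bound $|w|_{L^\infty}^2\le C\|w\|_{L^2}\|w_x\|_{L^2}+C\|w\|_{L^2}^2$, using the structural bounds $|\kappa_\theta|\le\kappa_2(1+\theta)^{q-1}$, $|\kappa_\rho|\le\kappa_2(1+\theta)^q$ and the already-proven $\int_0^t\int(u_{xt}^2+\theta_t^2+z_{xt}^2)\le C$, with a small multiple of $\int\theta_{xt}^2$ absorbed on the left. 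Gr\"onwall then yields $\sup_t\int\theta_t^2\,dx+\int_0^t\int\theta_{xt}^2\,dxds\le C$, and reading $\theta_{xx}=\tfrac{v}{\kappa}\bigl(e_\theta\theta_t-H-(\tfrac{\kappa}{v})_x\theta_x\bigr)$ off the equation, once more bounding $\int(\theta_x^4+v_x^2\theta_x^2)\,dx$ by interpolation and Young, gives $\sup_t\int\theta_{xx}^2\,dx\le C$. The genuine difficulty lies precisely here: the $(\theta,\rho)$-dependence of $\kappa$ makes $(\tfrac{\kappa}{v})_t$ and $(\tfrac{\kappa}{v})_x$ truly nonlinear, and the cubic cross terms such as $\int|\theta_t|^3$, $\int\theta_t^2\theta_x^2$ and $\int v_x^2\theta_x^2$ must be tamed by one-dimensional interpolation and repeated Young absorptions rather than by any soft argument, so that having $\theta$ bounded both above (Lemma~\ref{lemma1313}) and now below is exactly what keeps every coefficient under control.
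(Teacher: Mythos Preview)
Your proposal is correct and, in fact, supplies more than the paper does: the paper gives no proof of this lemma at all, stating only that ``with the aid of the above a priori estimates, we further deduce a priori estimates of higher derivatives of $(\rho,u,\theta,z)$ which is analogous to the reference \cite{umeharaTani}, and then we omit its proofs for brevity.'' Your three-step outline --- a comparison/minimum-principle argument for the lower bound on $\theta$, standard $L^2$ energy estimates on \eqref{sub4} (test with $z_t$, then differentiate in $t$ and test with $z_t$, then read off $z_{xx}$), and the analogous time-differentiated energy argument on \eqref{sub3} followed by the elliptic recovery of $\theta_{xx}$ --- is exactly the scheme carried out in \cite{umeharaTani} and \cite{kawohl,Jiang}, which is what the paper is tacitly invoking.

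A small remark on your presentation: in the $\theta$-lower-bound step, the passage from ``$e_\theta m'\ge -Cm^2$ at a minimiser'' to the Dini-derivative inequality $D^{-}m(t)\ge -Cm(t)^2$ deserves one line of justification (e.g.\ via the usual trick of comparing $\theta(x,t)-\theta(x_0,t_0)$ for $t\nearrow t_0$ at a point $x_0$ where the minimum is attained), and the boundary case is handled cleanly by even reflection since the Neumann condition forces $\theta_x=0$ there. In the $\theta_t$-energy step, the cubic terms you flag (e.g.\ $\int|\theta_t|^3$, $\int\theta_t^2\theta_x^2$) are indeed the crux and are closed, as you say, by the one-dimensional interpolation $\|w\|_{L^\infty}^2\le C\|w\|_{L^2}\|w_x\|_{L^2}+C\|w\|_{L^2}^2$ together with the already-established $\int_0^t\!\int\theta_t^2\le C$; this is precisely how it is done in the references the paper cites. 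Note also that the paper's stated bound $\int_0^1\theta_{xt}^2\,dx\le C$ (pointwise in $t$) is almost certainly a misprint for $\int_0^1\theta_t^2\,dx\le C$, which is what your argument delivers and what is actually needed for the subsequent H\"older estimates.
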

For the H\"{o}lder estimation of global solutions, we refer to
\cite{umeharaTani} for detailed processes. Thus, the proof of
Theorem \ref{thm} is complete.

As mentioned in Remark \ref{rem22}, the same conclusion are also
satisfied for the external force $p_e\leq 0$ on the free boundaries
provided  that we obtain a priori estimates of specific volume, see
the Appendix. With Lemma \ref{appendix} in hand,  we can get the
same a priori estimates of $(v,u,\theta,z)$.

\section*{Appendix}

\begin{lemma}\label{appendix}
If $p_e\leq  0$, then it has
\begin{equation}
\int_0^1v(x,t)dx\leq C(T),\notag
\end{equation}
 for $0<t\leq T$.
\end{lemma}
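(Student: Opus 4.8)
The plan is to avoid the representation formula \eqref{lemma441} altogether: substituting it into $\int_0^1 v\,dx$ produces the factor $\exp\!\big(\tfrac1\mu\int_0^t p\,d\tau\big)$, whose control would require exactly the temperature integrals of Lemma \ref{Lemma52}, which themselves presuppose $v\in L^1$. That route is circular, and breaking the circle is the main obstacle. Instead I would run a virial (second–moment) argument that feeds only on the total–energy balance, noting first that this balance holds irrespective of the sign of $p_e$: its derivation (multiply \eqref{sub2} by $u$, add \eqref{sub3}, integrate \eqref{sub4}) never uses $p_e>0$. Hence $\int_0^1\big(\tfrac{u^2}2+e+\tfrac{Gx(1-x)v}2\big)dx\le C+|p_e|\int_0^1 v\,dx$, which for $p_e\le0$ gives $\int_0^1 u^2\,dx$, $\int_0^1\theta\,dx$ and $\int_0^1 v\theta^4\,dx$ all bounded by $C\big(1+L(t)\big)$, where $L(t):=\int_0^1 v\,dx$.

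Next I would introduce the physical position $y(x,t):=a(t)+\int_0^x v(\xi,t)\,d\xi$, so that $y_x=v$ and $y_t=u$ (using $a'(t)=u|_{x=0}$ and $v_t=u_x$), together with the second moment $\mathcal F(t):=\tfrac12\int_0^1 y^2\,dx\ge0$, whose derivative is $\mathcal F'=\int_0^1 yu\,dx$. Multiplying \eqref{sub2} by $y$ and integrating, using $[-p+\mu u_x/v]_{x=0,1}=-p_e$ and $\int_0^1 y_x\sigma\,dx=-\int_0^1 vp\,dx+\mu L'$ with $\sigma:=-p+\mu u_x/v$, I obtain the key identity
\begin{equation*}
\frac{d}{dt}\big(\mathcal F'(t)+\mu L(t)\big)=\int_0^1 u^2\,dx+|p_e|L+\int_0^1\Big(R\theta+\tfrac{a}{3}v\theta^4\Big)dx-\frac G2\int_0^1 x(1-x)v\,dx .
\end{equation*}
Every term on the right is either non-positive (the gravitational one) or bounded by $C(1+L)$ via the energy inequality, so $\Psi:=\mathcal F'+\mu L$ satisfies $\Psi(t)\le C(T)+C\int_0^t L\,ds$.

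The closing step is the heart of the matter: it converts this one–sided bound into a genuine bound on $L$ by exploiting $\mathcal F\ge0$. Writing $m(t):=\int_0^t L\,ds$ and integrating $\mathcal F'=\Psi-\mu L$ gives $0\le\mathcal F(t)=\mathcal F(0)+\int_0^t\Psi\,ds-\mu m(t)$, whence $\mu m(t)\le C(T)+C\int_0^t m\,ds$; Gr\"onwall's inequality then yields $m(t)\le C(T)$. Feeding this back gives $\Psi(t)\le C(T)$ and $\int_0^1 y^2\,dx=2\mathcal F(t)\le C(T)$, so finally
\begin{equation*}
\mu L(t)=\Psi(t)-\int_0^1 yu\,dx\le C(T)+\Big(\int_0^1 y^2\,dx\Big)^{1/2}\Big(\int_0^1 u^2\,dx\Big)^{1/2}\le C(T)\big(1+L(t)^{1/2}\big),
\end{equation*}
a sublinear inequality that forces $L(t)\le C(T)$. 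To summarize the difficulty: the direct and representation-type estimates are circular because they need temperature bounds that already require $v\in L^1$; the virial identity escapes this since it rests only on the sign-insensitive energy balance, and the positivity of the second moment $\mathcal F$ is precisely what upgrades the resulting differential inequality into an absolute bound on $\int_0^1 v\,dx$.
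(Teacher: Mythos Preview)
Your argument is correct. Both routes rest on the same two inputs—the sign-insensitive total-energy balance, which bounds $\int_0^1 u^2\,dx$, $\int_0^1\theta\,dx$ and $\int_0^1 v\theta^4\,dx$ by $C(1+L(t))$, and a virial-type identity extracted from the momentum equation—but the executions differ. The paper integrates \eqref{sub2} once in $x$, multiplies by $v$, and integrates over $(0,1)\times(0,t)$; the awkward term $\int_0^t\!\int_0^1 v\big(\int_0^x u_t\,dy\big)\,dx\,ds$ is then dismantled via the representation $v=v_0+\int_0^t u_x\,ds$ and an integration by parts in time, after which a single Gr\"onwall closes the estimate. You instead multiply \eqref{sub2} by the physical position $y$ (since $y_x=v$, your multiplier and the paper's differ by one integration by parts in $x$); the structural relation $y_t=u$ converts the time-derivative term directly into $\mathcal F''-\int_0^1 u^2\,dx$, and you then exploit the positivity of the second moment $\mathcal F$ to bootstrap: first bound $\int_0^t L$, then $\Psi$ and $\mathcal F$, and finally $L$ via a sublinear inequality. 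Your path is more conceptual—the second-moment interpretation makes the mechanism transparent and would adapt readily to other boundary conditions—at the price of a two-step closure, whereas the paper's manipulation is more ad hoc but reaches a one-shot Gr\"onwall.
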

\begin{proof}
It is derived from integrating \eqref{sub2} over $[0,x]$,
\begin{equation}
\int_0^xu_tdy=-p+\frac{\mu
u_x}{v}-G\bigg(\frac{x(x-1)}{2}\bigg)+p_e.\nonumber
\end{equation}
which implies by multiplying  $v$ and integrating with respect to
$x$  and $t$ that
\begin{equation}
\begin{split}
&\mu\int_0^1vdx+\frac{G}{2}\int_0^t\int_0^1x(1-x)vdxds\\
&=\mu\int_0^1v_0dx-p_e\int_0^t\int_0^1vdxds\\
&+\int_0^t\int_0^1vpdxds+\int_0^t\int_0^1v\bigg(\int_0^xu_tdy\bigg)dxds.\label{lemma21}
\end{split}
\end{equation}
 Obviously, it yields

\begin{equation}
\begin{split}
&\int_0^t\int_0^1vpdxds\\
&=\int_0^t\int_0^1\bigg(R\theta+\frac{a}{3}v\theta^4\bigg)dxds\\
&\leq C\int_0^t\int_0^1edxds\\
&\leq C(T)-C p_e\int_0^t\int_0^1vdxds. \label{lemma22}
\end{split}
\end{equation}
 On the other hand, we get from \eqref{sub1} by integration
\begin{equation*}
v(x,t)=v_0(x)+\int_0^tu_x(x,s)ds,
\end{equation*}
which leads to
\begin{equation}
\begin{split}
&\int_0^t\int_0^1v\bigg(\int_0^xu_tdy\bigg)dxds\\
&=\int_0^1\bigg(v_0(x)+\int_0^tu_x(x,s)ds\bigg)\bigg(\int_0^xudy\bigg)dx\\
&+\int_0^t\int_0^1u^2dxds-\int_0^1v_0(x)\bigg(\int_0^xu_0(y)dy\bigg)dx\\
&=\int_0^1v_0(x)\bigg(\int_0^x(u-u_0(y))dy\bigg)dx+\int_0^t\int_0^1u^2dxds\\
&-\int_0^1u(x,t)\bigg(\int_0^tu(x,s)ds\bigg)dx,\notag
\end{split}
\end{equation}
 by integrating by parts. Using Young inequality and initial conditions, we
get from \eqref{lemma111}
\begin{equation}
\begin{split}
&\int_0^t\int_0^1v\bigg(\int_0^xu_tdy\bigg)dxds\\
&\leq \varepsilon
\int_0^1u^2dx+C_{\varepsilon}\int_0^t\int_0^1u^2dxds+C_{\varepsilon}\\
&\leq -\varepsilon
p_e\int_0^1vdx-C_{\varepsilon}p_e\int_0^t\int_0^1vdxds+C_{\varepsilon}(T).\label{lemma23}
\end{split}
\end{equation}
Collecting \eqref{lemma21}--\eqref{lemma23}, we have
\begin{equation}
\begin{split}
&(\mu+\varepsilon
p_e)\int_0^1vdx+\frac{G}{2}\int_0^t\int_0^1x(1-x)vdxds\\
&\leq
-C_{\varepsilon}p_e\int_0^t\int_0^1vdxds+C_{\varepsilon}(T).\notag
\end{split}
\end{equation}
 Choosing sufficiently small $\varepsilon$ and by Gronwall
inequality, one obtains the desired result for $p_e\leq0$. The proof
is complete.
\end{proof}
\section*{Acknowledgements} This work is partially supported by
National Nature Science Foundation of China (NNSFC-10971234,
NNSFC-10962137,NNSFC-11001278), China Postdoctoral Science
Foundation funded project (NO. 20090450191), Natural Science
Foundation of GuangDong (NO. 9451027501002564) and the Fundamental
Research Funds for the Central Universities.


\end{document}